\newtheorem{theorem}{Theorem}[section]
\newtheorem{corollary}[theorem]{Corollary}
\newtheorem{lemma}[theorem]{Lemma}
\newtheorem{proposition}[theorem]{Proposition}
\theoremstyle{definition}
\newtheorem{definition}[theorem]{Definition}
\newtheorem{remark}[theorem]{Remark}
\numberwithin{equation}{section}
\titleformat{\section}[block]{\scshape\filcenter}{\thesection.}{3pt}{}
\titleformat{\subsection}[block]{\scshape\filcenter}{\thesubsection.}{3pt}{}
\title{\large{\textbf{A CANONICAL NEIGHBORHOOD THEOREM FOR MEAN CURVATURE FLOW IN HIGHER CODIMENSION}}}
\author{\textsc{\small KEATON NAFF}}
\date{}
\begin{document}
\maketitle

\begin{abstract}In dimensions $n \geq 5$, we prove a canonical neighborhood theorem for the mean curvature flow of compact $n$-dimensional submanifolds in $\mathbb{R}^N$ satisfying a pinching condition $|A|^2 < c|H|^2$ for $c = \min \{ \frac{3(n+1)}{2n(n+2)},\frac{1}{n-2}\}.$\end{abstract}

\maketitle

\section{Introduction}

In this paper, we continue our study of singularity formation of the mean curvature flow in higher codimension for initial data that satisfies a natural curvature pinching condition. We consider compact $n$-dimensional solutions of mean curvature flow, $F : M \times [0, T) \to \mathbb{R}^N$, for which the initial immersion satisfies $|A|^2 < c |H|^2$. In \cite{AB10}, Andrews and Baker showed this condition is preserved by the mean curvature flow (in any codimension) when $c \leq \frac{4}{3n}$. Moreover, for $c \leq \min\{\frac{4}{3n}, \frac{1}{n-1}\}$ they proved a suitable extension (to higher codimension) of Huisken's classical result \cite{Hui84} on the flow of closed convex hypersurfaces into spheres. Recently in \cite{Ngu18}, Nguyen proved cylindrical and pointwise derivative estimates when $c \leq\min\{\frac{4}{3n}, \frac{1}{n-2}\}$, thereby similarly extending two of the very important a priori estimates in the impactful works of Huisken and Sinestrari \cite{HS99, HS09} on the flow of closed, two-convex hypersurfaces. Following the work of Nguyen, in recent papers \cite{Naf19a, Naf19b}, for $c \leq \min \{\frac{3(n+1)}{2n(n+2)},\frac{1}{n-2}\}$, we have shown that the only blow-up models at the first singular time for these pinched solutions of mean curvature flow are the codimension one shrinking round spheres, shrinking round cylinders, and translating bowl solitons. This is an extension of the classification result of Brendle and Choi \cite{BC20}. 

The purpose of this paper is to upgrade our description of the infinitesimal scale at spacetime points of infinite curvature to a description of small scales around spacetime points of high curvature. Since Perelman's work \cite{Per02a} on the Ricci flow in three dimensions, such results are nowadays known as canonical neighborhood theorems. In the mean curvature flow, Huisken and Sinestrari essentially proved a canonical neighborhood theorem through their Neck Detection Lemma and Neck Continuation Theorem (Lemma 7.4 and Theorem 8.2 in \cite{HS09}). The first of these two results has been extended to higher codimension by Nguyen in \cite{Ngu18}. It is an interesting problem to prove a version of the Neck Continuation Theorem in higher codimension. Huisken and Sinestrari's proof of this theorem makes significant use of the notion of convexity, which is absent in higher codimension, so some new ideas will be needed. At the same time as this work was completed, Nguyen has published a preprint where he has addressed this problem \cite{Ngu20} (as well as developed a surgery procedure). We have a different approach here and prove the following theorem, which is much closer in spirit to Perelman's result. 

\begin{theorem}\label{canonical-nbhd-thm}
Suppose $F_0 : M \to \mathbb{R}^N$ is an immersion of a closed manifold of dimension $n \geq 5$ satisfying $|A|^2 < \tilde{c}_2 |H|^2$, where $\tilde{c}_2 := \min \{ \frac{3(n+1)}{2n(n+2)},\frac{1}{n-2}\}$. There exist constants $\tilde \varepsilon$ and $\tilde K$, depending upon $F_0$, with the following property. Let $F : M \times [0, T) \to \mathbb{R}^{N}$ denote the solution of mean curvature flow with initial immersion given by $F_0$. Given $\varepsilon_0 \in (0, \tilde \varepsilon)$ and $K_0 \in (\tilde K, \infty)$, there exists a positive number $\hat r > 0$, depending upon $F_0$, $\varepsilon_0$, and $K_0$, with the following property. If $(p_0, t_0)$ is a spacetime point such that $Q_0:= |H|(p_0, t_0) \geq \hat r^{-1}$, then the solution is $\varepsilon_0$-close in the intrinsic parabolic neighborhood $B_{g(t_0)}(p_0, Q_0^{-1} K_0) \times [t_0 - K_0 Q_0^{-2}, t_0]$ to an ancient model solution.  
\end{theorem}

In fact, one can show that the constants $\tilde \varepsilon$ and $\tilde K$ depend only upon the dimension (see Remark \ref{constants}). Here, an \textit{ancient model solution} is an $n$-dimensional solution of mean curvature flow in $\mathbb{R}^N$, which is ancient, nonflat, complete, codimension one (i.e. lying in an $(n+1)$-dimensional hyperplane), uniformly two-convex, and noncollapsed (see Definition \ref{ancient_model}). The meaning of \textit{$\varepsilon_0$-close} is contained in Definition \ref{varepsilon_close}. Roughly, it means that the family of immersions $F(\cdot, t)$ is close to the family of immersions of a model after reparametrization.  

To prove this theorem, we will follow the original strategy of Perelman \cite{Per02a}. In particular, we will adapt a variation of Perelman's proof given by Brendle in \cite{Bre19} to our setting. In Section \ref{prelims}, we establish definitions and results we will use in the proof of Theorem \ref{canonical-nbhd-thm}. In Section \ref{canon}, we give the proof of Theorem \ref{canonical-nbhd-thm}. In the Appendix, we discuss a local compactness property for solutions of mean curvature flow that is used in our proof.  \\

\noindent \textbf{Acknowledgements:} I would like to thank my advisor, Prof. Simon Brendle, for many inspiring discussions and for his direction in this work.

\section{Preliminaries}\label{prelims}

Henceforth, we will let $H$ denote the scalar mean curvature and $\vec H$ denote mean curvature vector. In higher codimension, this means $H = |\vec H|$. Since we assume $|A|^2 < c H^2 \implies H > 0$, we may define a $(0,2)$-tensor $h_{ij} := \langle A_{ij}, H^{-1} \vec H \rangle$, where $A$ is the full vector-valued version of second fundamental form. In codimension one, $h$ is just the usual scalar-valued version of the second fundamental form. We will adopt the notations $P(p, t, r, \theta) := B_{g(t)}(p, r) \times [t - \theta, t]$ and 
\[
\hat P(p, t, r, \theta) := P(p, t, H(p, t)^{-1}r, H(p, t)^{-2} \theta),
\]
 as in \cite{HS09}, to denote intrinsic parabolic neighborhoods.

We will use the following definitions for $\varepsilon$-necks and $\varepsilon$-caps. 

\begin{definition}\label{necks_caps}
Let $\varepsilon > 0$ be a small positive constant and $F : M \to \mathbb{R}^N$ an isometric immersion of a complete Riemannian manifold. Let $\bar g$ denote the standard metric on the round cylinder $S^{n-1} \times \mathbb{R}$ of radius $1$ (or, equivalently, of constant scalar curvature $(n-1)(n-2)$). 
\begin{itemize}
\item An $\varepsilon$-neck is a compact region $N \subset M$ for which there exists a diffeomorphism $\phi : S^{n-1} \times [-\varepsilon^{-1}, \varepsilon^{-1}] \to N$, a positive constant $r > 0$, and an isometric embedding $\bar F : S^{n-1} \times [-\varepsilon^{-1}, \varepsilon^{-1}] \to \mathbb{R}^N$ (with respect to $\bar g$) such that the immersion $r^{-1}(F \circ \phi)$ is $\varepsilon$-close in $C^{[1/\varepsilon]}$ on $S^{n-1} \times [-\varepsilon^{-1}, \varepsilon^{-1}]$ to the embedding $\bar F$ with respect to the metric $\bar g$. The constant $r$ is called the radius of the neck $N$. For any $z \in [-\varepsilon^{-1}, \varepsilon^{-1}]$, we call $\phi(S^{n-1} \times \{z\}) \subset N$ a cross-sectional sphere of the neck. 
\item We say a point $p_0 \in M$ lies at the center of an $\varepsilon$-neck if $p_0$ lies on the central cross-sectional sphere, $\phi(S^{n-1} \times \{0\})$, of a neck of radius $\frac{n-1}{H(p_0)}$. 
\item An $\varepsilon$-cap is a compact region $D \subset M$ diffeomorphic to a closed $n$-dimensional ball with the property that $\partial D$ is the central cross-sectional sphere of an $\varepsilon$-neck.
\end{itemize}
\end{definition} 

Recently, there has been significant progress in the classification of ancient solutions of mean curvature flow that are two-convex and noncollapsed. By the works of Brendle and Choi \cite{BC19, BC20} and of Angenent, Daskalopoulos, and \v Se\v sum \cite{ADS19,ADS20}, an $n$-dimensional, ancient, uniformly two-convex, noncollapsed and nonflat solution of mean curvature flow in $\mathbb{R}^{n+1}$ is either a family of shrinking round spheres, a family of shrinking round cylinders, a translating bowl soliton, or an ancient oval. Only the first three can arise as blow-up limits at the first singular time. It is still an open problem to determine whether an ancient oval can occur as a singularity model at subsequent singular times (see \cite{CHH21} for more). In any case, for the canonical neighborhood theorem, it is important to include the ancient ovals in our class of model solutions since it is possible for regions of high curvature to be modeled on domains within ancient ovals. 

Similar progress has also occurred for the Ricci flow in dimension three. In particular, a classification of ancient $\kappa$-solutions in dimension three has been established in \cite{Bre20, BDS19}. In the Ricci flow, Perelman did not use an explicit classification of model solutions to prove his canonical neighborhood theorem. However, as above, a qualitative description of the ancient oval solution was still necessary. Perelman \cite{Per02b} gave the first construction of ancient ovals for the Ricci flow in three dimensions (White \cite{Whi03} gave the analogous construction for the mean curvature flow). We will not need an explicit classification of ancient model solutions either. So that our results do not depend upon a classification, we opt to use the following definition for ancient model solutions. 

\begin{definition}\label{ancient_model}
An ancient model solution is an $n$-dimensional ancient, nonflat, complete, connected, codimension one solution of mean curvature flow in $\mathbb{R}^N$ that is uniformly two-convex and noncollapsed. 
\end{definition}
Note that since the model flow is contained in an $(n+1)$-dimensional hyperplane, here noncollapsing and two-convexity are meant with respect to this hyperplane.

By Theorem 1.11 in Haslhofer-Kleiner \cite{HK17a}, any ancient, noncollapsed, mean-convex solution of mean curvature flow is automatically weakly convex. Moreover, by Theorem 1.8 in \cite{HK17a}, given a noncollapsing constant $\alpha > 0$, there exists constants $\gamma_1:= \gamma_1(n, \alpha)$ and $\gamma_2:= \gamma_2(n, \alpha)$ with the property that any ancient, $\alpha$-noncollapsed, mean-convex solution of mean curvature flow satisfies the pointwise derivative estimates $|\nabla A| \leq \gamma_1 H^2$ and $|\nabla^2 A| \leq \gamma_2H^3$. In \cite{Naf19b}, we proved two structure theorems for weakly convex, uniformly two-convex, ancient solutions satisfying the derivative estimates $|\nabla A| \leq \gamma_1 H^2$ and $|\nabla^2 A| \leq \gamma_2 H^3$. Our work also shows that these two derivative estimates together with convexity and uniform two-convexity imply noncollapsing (a sort of converse to the Haslhofer-Kleiner result). The following proposition is a straightforward corollary of ``tube and cap'' structure results in \cite{Naf19b}. It is also a consequence of the works \cite{ADS20,BC20} as well as \cite{HK17b}.

\begin{proposition}\label{model_nbhds}
Given $\varepsilon > 0$ and $\alpha > 0$, there exist positive constants $C_1 := C_1(n, \alpha, \varepsilon)$ and $C_2 := C_2(n, \alpha, \varepsilon)$ with the following property. Assume $\bar F : \bar M \times (-\infty, T) \to \mathbb{R}^N$ is an ancient model solution which is $\alpha$-noncollapsed. Then for each space-time point $(p_0, t_0)$, there exists a closed neighborhood $B \subset \bar M$ containing $p_0$ such that $B_{g(t_0)}(p_0,C_1^{-1} H(p_0, t_0)^{-1}) \subset B \subset B_{g(t_0)}(p_0,C_1 H(p_0, t_0)^{-1})$ and $C_2^{-1} H(p_0, t_0) \leq H(p, t_0) \leq  C_2 H(p_0, t_0)$ for every $p \in B$. Moreover, the neighborhood $B$ is either an $\varepsilon$-neck, an $\varepsilon$-cap, or a closed manifold diffeomorphic to $S^n$. 
\end{proposition}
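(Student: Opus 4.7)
The plan is to rescale so that $H(p_0,t_0)=1$ and invoke the structure theorems of \cite{Naf19b}, which yield a trichotomy for the local geometry at $(p_0,t_0)$ matching the three possible descriptions of $B$ in the statement.

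First, I would record two standard consequences of the hypothesis that $\bar F$ is an $\alpha$-noncollapsed ancient model solution. By Theorem 1.11 of \cite{HK17a}, $\bar F$ is weakly convex, and by Theorem 1.8 of \cite{HK17a} it satisfies the scale-invariant pointwise derivative estimates $|\nabla A| \leq \gamma_1 H^2$ and $|\nabla^2 A| \leq \gamma_2 H^3$ with constants depending only on $n$ and $\alpha$. Rewriting the first of these as $|\nabla H^{-1}| \leq \gamma_1$ and integrating along unit-speed geodesics shows that $H$ is comparable to $H(p_0,t_0)$ on every geodesic ball of radius at most $\tfrac{1}{2\gamma_1} H(p_0,t_0)^{-1}$. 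This takes care of the left inclusion in the ball-containment condition and reduces the curvature-oscillation condition on $B$ to any diameter bound of the form $\mathrm{diam}(B) \leq C(n,\alpha,\varepsilon)\, H(p_0,t_0)^{-1}$.

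Next, I would apply Theorems 3.3 and 4.3 of \cite{Naf19b}, whose combined content, under weak convexity, uniform two-convexity, ancientness, and the above derivative estimates, is the following trichotomy at the spacetime point $(p_0,t_0)$: either (i) $p_0$ lies at the center of an $\varepsilon$-neck of radius $(n-1)/H(p_0,t_0)$ in the sense of Definition \ref{necks_caps}, or (ii) $p_0$ is contained in an $\varepsilon$-cap of intrinsic diameter at most $L(n,\alpha,\varepsilon)\, H(p_0,t_0)^{-1}$, or (iii) the time-$t_0$ slice is closed, diffeomorphic to $S^n$, and $\varepsilon$-close after rescaling to a round sphere of radius $(n-1)/H(p_0,t_0)$. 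Given this trichotomy, I take $B$ to be, respectively, the $\varepsilon$-neck, the $\varepsilon$-cap, or the entire time-$t_0$ slice. The two ball inclusions then follow from the diameter bounds in each case, together with the fact that $p_0$ is an interior point of $B$ with definite geodesic distance to $\partial B$ (cases (i)–(ii)) or with no boundary at all (case (iii)). The curvature sandwich $C_2^{-1} H_0 \leq H \leq C_2 H_0$ on $B$ follows from $\varepsilon$-closeness to the model cylinder or sphere in cases (i) and (iii), and in case (ii) from iterating the gradient estimate $|\nabla H^{-1}|\leq \gamma_1$ across a cap whose diameter is $O(H_0^{-1})$.

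The main obstacle is really the content of Theorems 3.3 and 4.3 of \cite{Naf19b}: producing the trichotomy, and in particular ruling out any exotic geometry at $(p_0,t_0)$ that is neither neck-like, nor cap-like, nor a thin spherical slice. Once that classification is granted, assembling $C_1$ and $C_2$ from the three cases is routine bookkeeping using the Haslhofer–Kleiner derivative estimates, which is why the author describes the proposition as a straightforward corollary of those two theorems.
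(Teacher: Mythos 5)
Your proposal matches the paper's approach: the paper also reduces the statement to the weak convexity and derivative estimates from Haslhofer--Kleiner (Theorems 1.11 and 1.8 of \cite{HK17a}) and then cites Theorems 3.3 and 4.3 of \cite{Naf19b} as giving precisely the neck/cap/closed-sphere trichotomy, declaring the proposition a straightforward corollary. The extra bookkeeping you supply (integrating $|\nabla H^{-1}|\leq\gamma_1$ for the curvature sandwich and the ball inclusions) is exactly the routine step the paper leaves implicit.
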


\begin{remark}\label{constants}
The constants $\tilde K$ and $\tilde \varepsilon$  from Theorem \ref{canonical-nbhd-thm} will depend upon $n, C_1$, and $C_2$. The initial immersion $F_0$ determines scale-invariant derivative estimate bounds $\gamma_1, \gamma_2$ satisfied by the singularity models (as in \cite{Ngu18}, following ideas of \cite{HS09}). These constants $\gamma_1$ and $\gamma_2$ in turn determine the noncollapsing constant $\alpha$ satisfied by the models (by \cite{Naf19b}). In this way, $\tilde{K}$ and $\tilde{\varepsilon}$ will depend upon $n$ and $F_0$. Note, however, the classification of ancient model solutions shows these models are universally noncollapsed for some $\alpha = \alpha(n)$. By relying upon the classification, we could remove the dependence of the constants $C_1$ and $C_2$ upon $\alpha$ and consequently the dependence of $\tilde{K}$ and $\tilde{\varepsilon}$ upon $F_0$, if we desired. In this case, one would not use the direct maximum principle proofs of the pointwise derivative estimates (as we do below), but instead deduce them in the induction on scales (as Perelman does for the Ricci flow). 
\end{remark}
 
There are a few reasonable topologies one could use to say a given solution is $\varepsilon_0$-close to a model solution on a parabolic neighborhood. Based on the compactness result in the Appendix, we will use the following definition. Fix a small constant $\varepsilon_0 > 0$ and a large constant $K_0 < \infty$. Suppose $F : M \times [0, T) \to \mathbb{R}^N$ is a solution of the mean curvature flow and $(p_0, t_0)$ is a spacetime point. Set $Q_0 := H(p_0, t_0)$. Suppose that $F$ is defined in the intrinsic parabolic neighborhood $\hat P(p_0, t_0, K_0, K_0)$. Consider the rescaled solution 
\[
\tilde F(p, t) := Q_0F(p, t_0 + Q_0^{-2}(t - t_0)). 
\]
After rescaling, we have  $\tilde g(t_0) = Q_0^2 g(t_0)$, $\tilde H(p_0, t_0) = 1$, and the solution $\tilde F$ is defined in the intrinsic parabolic neighborhood $P(p_0,t_0, K_0, K_0)$.
\begin{definition}\label{varepsilon_close}
Let $F : M \times [0, T) \to \mathbb{R}^N$ be an $n$-dimension solution to mean curvature flow and $(p_0, t_0)$ a spacetime point satisfying $H(p_0, t_0) = 1$. Suppose $F$ is defined in the parabolic neighborhood $P(p_0, t_0, K_0, K_0)$ (i.e. $\partial M$, if it exists, satisfies $d_{g(t_0)}(p_0, \partial M) > K_0$ and $[t_0 - K_0, t_0] \subset [0, T)$). We will say the solution $F$ is $\varepsilon_0$-close in $P(p_0, t_0, K_0, K_0)$ to an ancient model solution if the following holds. We can find the following:
\begin{itemize}
\item an ancient model solution $\bar F: \bar M \times (-\infty, t_0] \to \mathbb{R}^N$;
\item a point $\bar p_0 \in \bar M$ with $H(\bar p_0, t_0) = 1$;
\item a smooth relatively compact domain $V$ such that $B_{\bar g( t_0)}(\bar p_0, K_0) \subset V \subset \bar M$;
\item a diffeomorphism $\Phi : V \to \Phi(V) \subset M$ such that $\Phi(\bar p_0) = p_0$ and $B_{g(t_0)}(p_0, K_0) \subset \Phi(V) \subset M$. 
\end{itemize}
Moreover, for each $t \in [t_0 - K_0, t_0]$, the immersions $F(\cdot, t) \circ \Phi$ and $\bar F(\cdot, t)$ are $\varepsilon_0$-close in $C^{[1/\varepsilon_0]}$ on $V$ with respect to the metric $\bar g := \bar g(t_0)$. Specifically, we think of $F(\cdot, t) \circ \Phi$ and $\bar F(\cdot, t)$ as $\mathbb{R}^N$-valued functions on $V$ and we require
\[
\sup_{t \in [t_0 -K_0, t_0]} \sup_{V}  \sum_{m =0}^{[1/\varepsilon_0]} \big|\bar \nabla^m\big(F(\cdot, t) \circ \Phi - \bar F(\cdot, t)\big) \big|_{\bar g}^2 < \varepsilon_0^2,
\]
where $\bar \nabla$ denotes the Levi-Civita connection of $\bar g$ on $\bar M$. 
\end{definition}
Of course, the definition applies to the rescaled solution if $H(p_0, t_0) \neq 1$. It follows from the evolution equation for $F$ that for any fixed pair of integers $(\ell, m)$, we can ensure that derivatives of the form $\frac{\partial}{\partial t}^\ell \bar \nabla^m (F(\cdot , t)\circ \Phi)$ are as close as we like to the corresponding derivatives of the model solution if we take $\varepsilon_0 \leq \frac{1}{2\ell + m}$ sufficiently small. 

In the final step of the proof of the canonical neighborhood theorem, we will need to make use of the following distance distortion estimate. The lemma is analogous to Lemma 8.3 in Perelman's work \cite{Per02a}. To the author's knowledge, this type of argument is originally due to Hamilton. By our pinching estimate, the product $Hh$ is comparable to $\mathrm{Ric}$ and this allows us follow Perelman's proof of the estimate for the Ricci flow. 

\begin{lemma}\label{dist_est}
Suppose $F : M \times (t_1, t_2) \to \mathbb{R}^{n+1}$ is a complete, convex, $n$-dimensional solution of mean curvature flow satisfying $|h|^2 \leq \beta^2 H^2$ for some $0 < \beta < 1$. Suppose we have a bound $H(\cdot, t) \leq \Lambda(t)$ for each $t \in (t_1, t_2)$. Then for any $p, q \in M$ and $t \in (t_1, t_2)$, we have
\[
0 \leq - \frac{d}{dt} d_{g(t)}(p, q) \leq C(n, \beta)\Lambda(t). 
\]
\end{lemma}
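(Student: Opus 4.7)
The plan is to adapt Perelman's proof of Lemma 8.3 in \cite{Per02a} to the mean curvature flow. Since the metric evolves by $\partial_t g = -2Hh$, in the first variation of arclength the Ricci tensor is replaced by the product $Hh$; the codimension-one Gauss equation $\Ric(T,T) = Hh(T,T) - |h \cdot T|^2$ together with the pinching and convexity will then let us transfer Perelman's index-form estimate for $\int \Ric$ to the corresponding estimate for $\int Hh$.

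Fix $t_0 \in (t_1, t_2)$ and let $\gamma : [0, L] \to (M, g(t_0))$ be a unit-speed minimizing geodesic from $p$ to $q$ with tangent $T = \dot\gamma$. Comparing $d_{g(t)}(p,q)$ with $L_t(\gamma)$ at nearby times and using the first variation, the problem reduces (in the appropriate one-sided Dini-derivative sense) to proving
\[
0 \leq \int_0^L Hh(T,T)\,ds \leq C(n, \beta)\, \Lambda(t_0).
\]
The lower bound is immediate from the observation that, in the intended application to ancient model solutions, the pinching $|h|^2 \leq \beta^2 H^2$ with $H > 0$ is accompanied by convexity $h \geq 0$ (guaranteed by Haslhofer-Kleiner in the noncollapsed model setting), so that $Hh(T,T) \geq 0$ pointwise.

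For the upper bound I would first establish the pointwise Gauss reduction $Hh(T,T) \leq (1-\beta)^{-1} \Ric(T,T)$. Diagonalizing $h$ in an orthonormal basis with eigenvalues $0 \leq \lambda_i \leq |h| \leq \beta H$ and writing $T = \sum T^i e_i$, the pinching yields
\[
|h \cdot T|^2 = \sum_i \lambda_i^2 (T^i)^2 \leq \beta H \sum_i \lambda_i (T^i)^2 = \beta\, Hh(T,T),
\]
and substituting into $\Ric(T,T) = Hh(T,T) - |h \cdot T|^2$ gives the claim. Then I would run Perelman's standard index-form cutoff argument: choose a piecewise-linear cutoff $\phi : [0, L] \to [0, 1]$ with $\phi(0) = \phi(L) = 0$, $\phi \equiv 1$ on $[r_0, L - r_0]$, and sum the non-negativity of the index forms for $\phi\, e_\alpha$ over a parallel orthonormal frame $\{e_\alpha\}$ perpendicular to $T$ to obtain
\[
\int_0^L \phi^2\, \Ric(T,T)\,ds \leq (n-1) \int_0^L (\phi')^2\, ds = \frac{2(n-1)}{r_0}.
\]
Adding the endpoint contribution $\int (1 - \phi^2)\, \Ric(T,T)\,ds \leq C(n, \beta)\, r_0 \Lambda^2$ via the uniform bound $|\Ric| \leq (\beta + \beta^2) H^2 \leq 2\Lambda^2$, and optimizing in $r_0 \sim 1/\Lambda$, one obtains $\int \Ric(T,T)\,ds \leq C(n, \beta)\, \Lambda$. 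Combined with the Gauss reduction, this yields the desired upper bound on $\int Hh(T,T)\,ds$.

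The main obstacle is precisely that the natural MCF quantity $\int Hh(T,T)\,ds$ differs from $\int \Ric(T,T)\,ds$ by the additional nonnegative term $\int |h \cdot T|^2\, ds$, which is only pointwise controlled by $\beta^2 \Lambda^2$ and would naively contribute $\beta^2 \Lambda^2 L$, growing linearly in the length of the geodesic. Perelman's index-form trick does not control this extra term directly, so the proof genuinely uses the sharp pinching $\beta < 1$: under convexity it gives the pointwise domination $|h \cdot T|^2 \leq \beta\, Hh(T,T)$, which is exactly what allows the bound on $\int Hh$ to inherit Perelman's $L$-independent $O(\Lambda)$ estimate on $\int \Ric$.
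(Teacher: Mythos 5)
Your proposal is correct and follows essentially the same route as the paper: reduce the first-variation integrand $Hh(T,T)$ to $\Ric(T,T)$ via the Gauss equation together with the pointwise estimate $|h\cdot T|^2 \leq \beta\,Hh(T,T)$, and then invoke Perelman's second-variation cutoff argument for minimal geodesics with $r_0 \sim 1/\Lambda$. One small but genuine observation you make that the paper leaves implicit: both the lower bound $0 \leq -\tfrac{d}{dt}d_{g(t)}(p,q)$ and the inequality $\lambda_i^2 \leq \beta H\lambda_i$ require $h \geq 0$, which does not follow from $|h|^2 \leq \beta^2 H^2$ alone once $\beta^2 > \tfrac{1}{n-1}$; in the intended application $\beta^2 = \tfrac{1}{n-1}$ and convexity is automatic (and, as you note, also comes from noncollapsedness), so the conclusion stands, but you are right to flag the hypothesis. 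The only thing you gloss over is the case of a short geodesic $L < 2r_0$, where the cutoff is unavailable; there one falls back on the crude estimate $\int Hh(T,T)\,ds \leq \Lambda^2 L \leq 2\Lambda$, as in the paper.
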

\begin{proof}
Recall that the evolution of the metric is given by $\frac{\partial}{\partial t} g_{ij} = -2 \langle \vec{H}, A_{ij} \rangle = -2 H h_{ij}$. Let us fix a time $t_0$ and restrict our attention to $(M, g(t_0))$. Fix some points $p, q \in M$. Set $\ell := d_{g(t_0)}(p, q)$ and let $\gamma : [0, \ell] \to M$ be a minimizing, unit-speed geodesic between $p$ and $q$. Set $X(s) = \gamma'(s)$. Then the derivative of the distance is given by
\[
\frac{d}{dt} d_{g(t)}(p, q) \Big|_{t = t_0}= - \int_0^\ell H h(X, X) \, ds. 
\]
We have assumed $h \geq 0$ and the inequality $|h|^2 \leq \beta^2 H^2$ implies $\lambda_n \leq \beta H$, where $\lambda_n$ denotes the maximum eigenvalue of $h$. From the identity above, together with $0 \leq Hh(X,X) \leq \beta H^2 < \Lambda(t_0)^2$, we first derive the crude estimate
\[
0 \leq - \frac{d}{dt} d_{g(t)}(p, q) \Big|_{t = t_0} \leq \Lambda(t_0)^2 \ell.  
\]
By the Gauss equation $Hh(X,X) = \mathrm{Ric}(X,X) + h^2(X,X)$. On the one hand, this gives $\mathrm{Ric}(X,X) \leq H h(X,X) < \Lambda(t_0)^2$. On the other hand, since $h^2(X, X) \leq \beta H h(X,X) $, it follows that $Hh(X,X) \leq \frac{1}{1-\beta} \mathrm{Ric}(X,X)$, and we have 
\[
0 \leq - \frac{d}{dt} d_{g(t)}(p, q) \Big|_{t = t_0} \leq  \frac{1}{1-\beta} \int_0^\ell \mathrm{Ric}(X,X) \, ds. 
\]
Having established the inequality above and the upper bound $\mathrm{Ric}(X,X) \leq \Lambda(t_0)^2$, we now proceed as Perelman does in \cite{Per02a}. Since $\gamma$ is a minimizing geodesic, for any vector field $V(s)$ defined along $\gamma$ and orthogonal to $X$, the second variation formula for the energy of $\gamma$ gives
\[
0 \leq \int_0^{\ell} |\nabla_XV|^2 - R(X, V, X, V) \, ds. 
\]
Consider a distance $r_0 \in (0, \frac{\ell}{2}]$.  Let $e_1(s), \dots, e_n(s)$ be a parallel orthonormal frame along $\gamma$ with $e_n(s) = X(s)$. Define a function 
\[
f(s) = \begin{cases} \frac{s}{r_0} & s \in [0, r_0] \\ 1 & s \in [r_0, \ell - r_0] \\ \frac{\ell -s}{r_0} & s \in [\ell - r_0, \ell] \end{cases}. 
\]
Plugging $V_i(s) = f(s)e_i(s)$ for $i = 1, \dots, n-1$ into the second variation formula and summing implies 
\begin{align*}
0  \leq \int_0^{r_0}\frac{n-1}{r_0^2} - \frac{s^2}{r_0^2}\mathrm{Ric}(X,X) \, ds - \int_{r_0}^{\ell - r_0} \mathrm{Ric}(X,X) \, ds  + \int_{\ell - r_0}^{\ell} \frac{n-1}{r_0^2} - \frac{(\ell - s)^2}{r_0^2} \mathrm{Ric}(X, X) \, ds. 
\end{align*}
Rearranging and using our upper bound for $\mathrm{Ric}$, we get
\begin{align*}
\int_0^{\ell} \mathrm{Ric}(X,X) \, ds &\leq 2(n-1)r_0^{-1}+   \int_0^{r_0} (1- \frac{s^2}{r_0^2})\mathrm{Ric}(X,X) \, ds+ \int_{\ell - r_0}^{\ell} (1- \frac{(\ell - s)^2}{r_0^2})\mathrm{Ric}(X, X) \, ds \\
& \leq 2(n-1)r_0^{-1} + 2\Lambda(t_0)^2 \int_0^{r_0} (1 - \frac{s^2}{r_0^2}) \, ds \\
&  = 2(n-1)r_0^{-1} + \frac{4}{3} \Lambda(t_0)^2r_0.
\end{align*}
Therefore, for any $0 < r_0 \leq \frac{\ell}{2}$, we have
\[
0 \leq - \frac{d}{dt} d_{g(t)}(p, q) \Big|_{t = t_0} \leq \frac{1}{1-\beta}\big( 2(n-1)r_0^{-1} + \frac{4}{3}n \Lambda(t_0)^2r_0\big). 
\]
If $\Lambda(t_0)^{-1} \leq \frac{\ell}{2}$, then taking $r_0 = \Lambda(t_0)^{-1}$ gives the desired conclusion. If $\Lambda(t_0)^{-1} > \frac{\ell}{2}$, then $\ell < 2 \Lambda(t_0)^{-1}$ and our crude estimate established earlier suffices. This completes the proof. 
\end{proof}

We will use the lemma above in conjunction with Hamilton's Harnack inequality for mean curvature flow. 

\begin{theorem}[R. Hamilton \cite{Ham95a}]
Assume $F : M \times (0, T) \to \mathbb{R}^{n+1}$ is a complete, weakly convex solution of mean curvature flow such that 
\[
\sup_{(p, t) \in M \times (\tau, T)} H(p, t) < \infty
\]
for all $\tau \in (0, T)$. Then, for all $(p, t) \in M \times (0, T)$ and any tangent vector $v \in T_pM$, 
\[
\frac{\partial}{\partial t} H  + \frac{1}{2t} H + 2\langle \nabla H, v \rangle + h(v, v)\geq 0.
\]
In particular, the quantity $\sqrt{t}H(p, t)$ is nondecreasing for each point $p \in M$ along the flow. 
\end{theorem}


\section{Proof of Theorem 1.1}\label{canon}

Our proof closely follows the proof of Theorem 7.2 in \cite{Bre19}, which is an adaptation of Perelman's work by Brendle. We will argue by contradiction and induction on scales, as introduced by Perelman. The theorem is stated only for $\varepsilon_0$ sufficiently small and $K_0$ sufficiently large depending upon the dimension and $F_0$. In Lemma \ref{step1} below, we determine a constant $\eta = \eta(F_0) < \infty$ for the pointwise derivative estimates. By the main result of \cite{Naf19b}, there exists a constant $\alpha = \alpha(\eta) = \alpha(F_0)>0$ so that any singularity model with these pointwise derivative estimates is $\alpha$-noncollapsed. We begin by fixing some $\varepsilon > 0$ small and letting $C_1 = C_1(n, \alpha(F_0), \varepsilon)$ and $C_2 = C_2(n, \alpha(F_0), \varepsilon)$ be the constants appearing in Proposition \ref{model_nbhds}. We will assume throughout the proof that $\varepsilon$ is sufficiently small depending upon certain universal constants that arise in the proof. We let $C(n)$ denote an arbitrary constant depending upon the dimension, which may change from line to line. We will prove the theorem assuming $\varepsilon_0$ is much smaller than $\varepsilon$ and $K_0 \geq 16C_1$. 

We take a moment to remind the reader that $H$ is shorthand for $|\vec H|$. Now two traces of the Gauss equation give $\mathrm{scal} = H^2 - |A|^2$. Together with the pinching assumption $\frac{1}{n} H^2 \leq |A|^2 \leq \tilde{c}_2 H^2$, this implies that $(1 - \tilde{c}_2) H^2 \leq \mathrm{scal}  \leq (1 - \frac{1}{n}) H^2$. Hence, in what follows, bounds for extrinsic curvature give bounds for the intrinsic curvature and vice versa. 

Now, if the assertion of the theorem is false, then there exists a sequence of spacetime points $(p_j, t_j)$ with the following properties:
\begin{enumerate}
\item[(i)] $Q_j := H(p_j, t_j) \geq j$. 
\item[(ii)] The solution is not $\varepsilon_0$-close in the parabolic neighborhood $\hat P(p_j, t_j, K_0, K_0)$ to any ancient model solution. 
\end{enumerate}
After point-picking process, we can further assume that the spacetime points $(p_j, t_j)$ have the additional property:  
\begin{enumerate}
\item[(iii)] If $ t \leq t_j$ and $(p, t)$ satisfies $H( p, t) \geq 4 Q_j$, then the solution is $\varepsilon_0$-close in the parabolic neighborhood $\hat P(p,  t, K_0, K_0)$ to an ancient model solution.  
\end{enumerate}
\noindent \textbf{Proof of point-picking.}
First, we ensure the flow is well defined on any $\hat{P}(p, t, K_0, K_0)$ if $H(p, t)$ is large enough. To see this, let $\bar{H}(s) := \sup_{M \times [0, s]} H$ and let $j_0 := \max\{\bar{H}(\frac{T}{2}), (\frac{K_0}{T})^\frac{1}{2}\}$. Of course $\bar{H}(s) < \infty$ for each $s \in [0, T)$ since $M$ is compact. Then if $(p, t)$ is any spacetime point with $H(p, t) \geq 2j_0$, we will have $\hat{P}(p, t, K_0, K_0) \subset M \times [0, T)$. Indeed, $j_0 \geq (\frac{K_0}{T})^{\frac{1}{2}}$ implies $\frac{K_0}{4j_0^2} \leq \frac{T}{4}$ and $H(p,t) \geq 2j_0 \geq 2 \bar{H}(\frac{T}{2})$ implies $t \geq \frac{T}{2}$. Therefore $t - K_0 H(p, t)^{-2} \geq \frac{T}{2} - \frac{K_0}{4j_0^2} \geq \frac{T}{4}$. Now restricting our attention to $j \geq j_0$, suppose the sequence of points $(p_j, t_j)$ do not yet satisfy property (iii). Then for each $j$, examine $M \times [0, t_j]$ to see if there exists a point $(p, t)$ such that $H(p, t) \geq 4 Q_j$, but the flow in $\hat{P}(p, t, K_0, K_0)$ is not $\varepsilon_0$-close to an ancient model solution. If such a point exists, denote it by $(p_{j, 1}, t_{j, 1})$. Otherwise, just let $(p_{j, 1}, t_{j,1}) = (p_j, t_j)$. The new sequence $(p_{j, 1}, t_{j, 1})$ satisfies property (i) (in the sense that $H(p_{j,1}, t_{j,1}) \geq j \to \infty$) and property (ii) by assumption. Now we repeat this procedure for the new sequence. We examine $M \times [0, t_{j, 1}]$ for a point violating property (iii) and obtain a new sequence $(p_{j,2}, t_{j,2})$ as before. For each $j$, this procedure produces a sequence of points $(p_{j, k}, t_{j, k})$ which must become constant for $k$ sufficiently large (depending upon $j$). Indeed, if after $k$ iterations, we have made $k$ replacements, then we must have $H(p_{j,k}, t_{j,k}) \geq 4^kQ_j \geq 4^k j$. On the other hand, since we have $t_{j,k} \leq t_j$, we also have $H(p_{j,k}, t_{j,k}) \leq \bar{H}(t_j)$. So this process can only continue as long as $4^k \leq \frac{1}{j} \bar{H}(t_j)$. Therefore, the desired sequence is a suitable diagonal subsequence of $(p_{j,k}, t_{j,k})$. \qed 

Under assumptions (i), (ii) and (iii), we will show that, after dilating the solution $F$ around the point $(p_j, t_j)$ by the factor $Q_j$, a subsequence of the rescaled solutions converges to an ancient model solution. Of course, this directly contradicts (ii) and thereby proves Theorem \ref{canonical-nbhd-thm}. The remainder of the proof, though perhaps well-understood by those familiar with \cite{Per02a, Bre19}, will take some pages. It may be helpful to note that in what follows, Lemma \ref{step1}, Lemma \ref{step2}, Proposition \ref{step3}, and Lemma \ref{step4} are essentially concerned with showing we can extract a complete limit at time zero. Then in Lemmas \ref{step5} and \ref{step6}, we show the limit can be extended backwards in time to a complete ancient solution, giving us the desired ancient model. 

In this first lemma, we begin with several a priori estimates for the flow. 

\begin{lemma}[a priori estimates \cite{Ngu18}]\label{step1}
There exists a large constant $\eta \geq 100$, depending upon the initial immersion $F_0$, such that $|\nabla A| \leq \eta H^2$, $|\nabla^2 A| \leq \eta H^3$, $|\nabla H| \leq \frac{\eta}{10} H^2$ and $|\frac{\partial}{\partial t} H|\leq \frac{\eta}{10} H^3$ hold on $M \times [0, T)$. Additionally, for every $\delta > 0$, there exists a constant $C_{\delta}$ (also depending upon $F_0$) such that $|A|^2 \leq \big(\frac{1}{n-1} + \delta) H^2 + C_{\delta}$ holds on $M \times [0, T)$. 
\end{lemma}

\begin{proof}  
We can find $a := a(F_0) > 0$ so that the estimate $|A|^2 + a \leq \tilde{c}_2 H^2$ holds initially. This inequality is preserved along the flow (by \cite{AB10}) and hence $H^2 \geq \frac{a}{\tilde{c}_2} > 0$ holds for all $t \in [0, T)$. Now the pointwise derivative estimates $|\nabla A| \leq \eta H^2$ and $|\nabla^2 A| \leq \eta H^3$ follow from the uniform lower bound for $H$ and Theorems 3.1 and 3.2 in \cite{Ngu18} for $\eta = \eta(F_0) \geq 100$ sufficiently large. 

Recall that $(\frac{\partial}{\partial t} \vec{H})^\perp = \Delta \vec{H} + \langle A_{pq}, \vec{H}\rangle A_{pq}$ (note the remark below). This implies $|(\frac{\partial}{\partial t} \vec{H})^\perp| \leq C(n)(|\nabla^2 A|+ |A|^2|\vec H|)$. Taking $\eta$ larger if necessary, this implies $|\nabla \vec{H}| \leq \frac{\eta}{100} H^2 $ and $|(\frac{\partial}{\partial t} \vec{H})| \leq  \frac{\eta}{100} H^3$ hold for all $t \in [0, T)$. Using Kato's inequality
implies $|\nabla H| \leq \frac{\eta}{10} H^2$ and $|\frac{\partial}{\partial t} H| \leq \frac{\eta}{10} H^3$ hold for $t \in [0, T)$, which gives the first set of estimates asserted in the lemma. 

Finally, the second assertion of the lemma is precisely the cylindrical estimate, Theorem 4.1, in \cite{Ngu18}.
\end{proof}

\begin{remark}
The evolution equations in higher codimension can be found in \cite{Naf19a}. In the above, and in what follows, we will write $\nabla$ for the natural covariant derivative acting on normal-bundle-valued tensors such as the second fundamental form $A$ and the mean curvature vector $\vec{H}$. In \cite{Naf19a}, we denoted this connection by $\nabla^\perp$, which is sometimes useful in order to distinguish it from the tangential component of the ambient connection of $\mathbb{R}^N$. In what follows however, the distinction is mostly irrelevant and we opt for simpler notation. See the Appendix for a slight elaboration on this point. 
\end{remark}

\begin{remark}
Although it is more convenient to use that the pointwise derivative estimates have direct proofs in our setting (and imply the cylindrical estimate), we could proceed differently. Although we will not pursue it here, an alternative approach would use that these estimates hold on the model solutions by \cite{HK17a} (for a uniform $\eta = \eta(\alpha)$). Then via the induction on scales, we may assume these estimates hold wherever the curvature is sufficiently large, which would suffice to prove the main theorem. This is the approach Perelman originally took for the Ricci flow (where, for instance, no direct proof of pointwise derivative estimates is yet known). 
\end{remark}

In the next lemma, we use the derivative estimates to establish short-range curvature estimates. We then obtain higher order pointwise derivative estimates for the second fundamental form.

\begin{lemma}[short-range curvature control]\label{step2}
As long as the flow is defined in the parabolic neighborhood $\hat{P}(\tilde{p}, \tilde{t}, \frac{1}{4\eta}, \frac{1}{4\eta})$, the scalar mean curvature satisfies $\frac{1}{4} H(\tilde{p}, \tilde{t}) \leq H \leq 4 H(\tilde{p}, \tilde{t})$ on $\hat{P}(\tilde{p}, \tilde{t}, \frac{1}{4\eta}, \frac{1}{4\eta})$. Consequently, for every $k$, there is a constant $C(k, n, \eta)$ such that $|\nabla^k A|(\tilde{p}, \tilde{t}) \leq C(k, n, \eta) H(\tilde{p}, \tilde{t})^{k+1}$. 
\end{lemma}

\begin{proof} Suppose $\hat{P}(\tilde{p}, \tilde{t}, \frac{1}{4\eta}, \frac{1}{4\eta})\subset M \times [0, T)$ (i.e. $\tilde{t} - \frac{1}{4\eta} H(\tilde{p}, \tilde{t})^{-2} \geq 0$). Assume $H(\tilde p, \tilde t \, ) = r_0^{-1}$. Suppose $(p, t)$ is a spacetime point satisfying $d_{g(\tilde t)}(p, \tilde p) \leq \frac{1}{4 \eta} r_0$ and $0 \leq \tilde t - t \leq \frac{1}{4\eta} r_0^2$. Integrating the spacial derivative estimate $|\nabla H| \leq \frac{\eta}{10} H^2$ gives 
\[
\Big|\frac{1}{H(\tilde p, \tilde t )}- \frac{1}{H(p, \tilde t )}  \Big| \leq \eta\, d_{g(\tilde t )}(p, \tilde p) \leq \frac{1}{4} r_0.
\]
Integrating the time derivative estimate $|\frac{\partial}{\partial t} H|\leq \frac{\eta}{10} H^3$ gives 
\[
\Big|\frac{1}{H(p, \tilde t )^2}- \frac{1}{H(p, t )^2}  \Big| \leq \eta \,|\tilde t - t| \leq \frac{1}{4} r_0^2.
\]
Combining these estimates with the assumption $H(\tilde p, \tilde t ) = r_0^{-1}$ shows that $\frac{1}{4} r_0^{-1} \leq H(p, t) \leq 4 r_0^{-1}$ for all $(p , t)$ in the parabolic neighborhood $P(\tilde p, \tilde t, \frac{r_0}{4\eta}, \frac{r_0^2}{4 \eta}) = \hat P(\tilde p, \tilde t, \frac{1}{4\eta}, \frac{1}{4 \eta})$. Now standard interior estimates for mean curvature flow imply that $|\nabla^k A|(\tilde p, \tilde t) \leq C(k, n, \eta) H(\tilde p, \tilde t)^{k+1}$.
\end{proof}

In the next proposition, we establish uniform bounds for $Q_j^{-1}H$ at bounded distance from $p_j$ at time $t_j$. This so-called long-range curvature estimate is by far the most involved step in the proof of the canonical neighborhood theorem. Establishing uniform bounds on $Q_j^{-1}H$ at bounded distance is important for extracting a complete limit at time zero. 

\begin{proposition}[long-range curvature control]\label{step3}
For all $\rho > 0$, 
\[
\mathbb{M}(\rho) := \limsup_{j\to \infty}  \sup_{p \in B_{g(t_j)}(p_j, Q_j^{-1}\rho )} Q_j^{-1} H(p, t_j)
\]
satisfies $\mathbb{M}(\rho) < \infty$. 
\end{proposition}

\begin{proof}
Note that $\mathbb{M}(\rho)$ is monotone increasing. By Lemma \ref{step2} (setting $(\tilde p, \tilde t) = (p_j, t_j)$), we have $\mathbb{M}(\rho) \leq 4$ for $0 < \rho \leq \frac{1}{4\eta}$. Define
\[
\rho_\ast := \sup \{ \rho \geq 0 : \mathbb{M}(\rho) < \infty \}. 
\]
Evidently, $\rho_\ast \geq \frac{1}{4\eta}$. Suppose, for sake of contradiction, that $\rho_\ast < \infty$. We will prove the proposition in five steps. Since the argument is lengthy, for readers unfamiliar with \cite{Per02a, Bre19}, here is an overview of the strategy.

\underline{Strategy of proof:}
\begin{enumerate}
\item By hypothesis, we have uniform bounds on $Q_j^{-1} H$ up to $g(t_j)$-distance $Q_j^{-1}\rho_\ast$ around $p_j$. After dilating by $Q_j$, we can extract a local limit immersion $F_{\infty}$ on a Riemannian geodesic ball $B_{g_{\infty}}(p_{\infty}, \rho_\ast)$ which is codimension one and convex. Because $\mathbb{M}(\rho_\ast) =\infty$, the limit contains a unit speed geodesic $\gamma_{\infty}(s)$, $s \in [0, \rho_\ast)$, (between $p_{\infty}$ and the boundary) along which the mean curvature $H_{\infty}(\gamma_{\infty}(s))$ blows up. 
\item We use the short-range curvature estimate to conclude $H_{\infty}(\gamma_\infty(s)) \gtrsim (\rho_\ast -s)^{-1}$ as $s \to \rho_\ast$. 
\item Using the induction on scales, we show that the end of the geodesic $\gamma_{\infty}([s_\ast, \rho_\ast))$ (for a suitably large $s_\ast \in [0, \rho_\ast)$) is contained in a domain $U \subset B_{g_{\infty}}(p_{\infty}, \rho_\ast)$ which is a union of (infinitely many) $C(n)\varepsilon$-necks.
\item It follows that through each point $\gamma_{\infty}(s)$, $s \in [s_\ast, \rho_\ast)$, there is a unique (nearly round) CMC sphere denoted $\Sigma_{u(s)} \subset U$ and the region $U$ is canonically foliated by such CMC spheres. We observe that $\mathrm{area}_{g_{\infty}}(\Sigma_{u(s)}) \sim H_{\infty}(\gamma_{\infty}(s))^{-n+1}$. Using that $B_{g_\infty}(p_\infty, \rho_\ast)$ has positive curvature, we show (Bishop-Gromov) area monotonicity of the CMC spheres implies that $H_{\infty}(\gamma_{\infty}(s)) \lesssim (\rho_\ast - s)^{-1}$ as $s \to \rho_\ast$.
\item Finally, by Steps 2 and 4, we have $H_{\infty}(\gamma_{\infty}(s)) \sim (\rho_\ast - s)^{-1}$ and $\mathrm{area}_{g_{\infty}}(\Sigma_{u(s)}) \sim (\rho_\ast - s)^{n-1}$. Roughly, this means after dilating the limit about $\gamma_{\infty}(s)$ as $s \to \rho_\ast$ by $(\rho_\ast - s)^{-1}$ and using rigidity in the area monotonicity, we can construct a local solution of mean curvature flow that has a final time slice which is a piece of a cone. This contradicts the strong maximum principle for the flow. 
\end{enumerate}
We now proceed with the proof. 

\underline{Step 1:} We begin by extracting a local geometric limit. By definition of $\rho_\ast$, for every $0 < \rho < \rho_\ast$, we have uniform bounds 
\[
\sup_{p \in B_{g(t_j)}(p_j, Q_j^{-1}\rho)}Q_j^{-1}H(p, t_j) \leq C(\rho).
\]
The higher order derivative estimates of Lemma \ref{step2} imply uniform bounds 
\[
\sup_{p \in B_{g(t_j)}(p_j, Q_j^{-1}\rho)} Q_j^{-k -1}|\nabla^kA|(p, t_j) \leq C(k, n, \eta, \rho)
\]
for each $0 < \rho < \rho_\ast$. For each $j$, define an immersion $\tilde F_j : B_{g(t_j)}(p_j, Q_j^{-1}\rho_\ast ) \to \mathbb{R}^N$ by 
\[
\tilde F_j(p) = Q_j\big(F(p, t_j) - F(p_j, t_j)\big). 
\]
Let $\tilde g_j := Q_j^2 g(t_j)$ denote the rescaled metric. Then $B_{g(t_j)}(p_j, Q_j^{-1}\rho_\ast ) = B_{\tilde g_j}(p_j, \rho_\ast)$. So $\tilde F_j : (M, \tilde g_j, p_j) \to (\mathbb{R}^N, g_{\mathrm{flat}}, 0)$ is a sequence of pointed isometric immersions and, for all $0 < \rho < \rho_\ast$ and for each $j$, we have uniform estimates for the second fundamental form of $\tilde F_j$ and its derivatives on $B_{\tilde g_j}(p_j, \rho)$. By Proposition \ref{immersion_compactness} in the Appendix, we can pass to a local limit. After passing to a subsequence, the sequence of immersions $\tilde F_j : (B_{\tilde g_j}(p_j, \rho_\ast), \tilde g_j, p_j) \to (\mathbb{R}^N, g_{\mathrm{flat}}, 0)$ converges on compact subsets to a locally defined pointed isometric immersion $F_{\infty} : (B_{\infty}, g_{\infty}, p_{\infty}) \to (\mathbb{R}^N, g_{\mathrm{flat}}, 0)$, where $B_{\infty} = B_{g_{\infty}}(p_{\infty}, \rho_\ast)$ is a geodesic ball in the metric $g_{\infty}$. In particular, $d_{g_{\infty}}(p_{\infty}, \partial B_{\infty}) \geq \rho_\ast$. 

The limit must satisfy $H_{\infty} > 0$. Indeed, since the (scale-invariant) derivative estimate $|\nabla H_{\infty}| \leq \eta H_{\infty}^2$ passes to the limit and $H_{\infty}(p_{\infty}) = 1$, this follows from integration. Now because the limit satisfies $H_{\infty} > 0$, the planarity estimate in \cite{Naf19a} (in particular, see Proposition 2.5 in \cite{Naf19a}) implies $F_{\infty}(B_\infty)$ is contained in an affine ($n+1$)-dimensional subspace. So we may write $F_{\infty} : B_\infty \to \mathbb{R}^{n+1}$. Since the limit is codimension one, the cylindrical estimate, $|A_{\infty}|^2 \leq \frac{1}{n-1} H_{\infty}^2$, implies the limit is weakly convex. 
 
By assumption $\mathbb{M}(\rho_\ast) = \infty$. Thus, we can find a sequence of points $q_j \in B_{g(t_j)}(p_j, Q_j^{-1}\rho_\ast)$ such that 
 \[
 Q_j^{-1} H(q_j, t_j) \to \infty \quad \text{ and } \quad \rho_j := Q_j d_{g(t_j)}(p_j, q_j) \to \rho_\ast.
 \] 
 For each $j$, let $\gamma_j : [0,  Q_j^{-1} \rho_j] \to B_{g(t_j)}(p_j, Q_j^{-1}\rho_\ast )$ be a unit-speed length-minimizing $g(t_j)$-geodesic between the points $p_j$ and $q_j$. The geodesics $s \mapsto \gamma_j(Q_j^{-1} s)$ for $s \in [0, \rho_j]$ converge locally to a unit-speed geodesic $\gamma_{\infty} : [0, \rho_\ast) \to B_\infty$ missing its terminal point.  
 
 \underline{Step 2:} The pointwise derivative estimate, $|\nabla H_{\infty}| \leq \eta H_{\infty}^2$, implies 
 \[
 H_{\infty}(\gamma_{\infty}(s)) = \lim_{j \to \infty} Q_j^{-1} H(\gamma_j(Q_j^{-1}s), t_j) \geq (2\eta(\rho_\ast - s))^{-1} \geq 16
 \]
 for $s \in [\rho_\ast - \frac{1}{32 \eta}, \rho_\ast)$. Indeed, suppose for some $\tilde s \in  [\rho_\ast - \frac{1}{32 \eta}, \rho_\ast)$, we have $H_{\infty}(\gamma_{\infty}(\tilde s)) < (2\eta(\rho_\ast - \tilde s))^{-1}$. Integrating the derivative estimate as in Lemma \ref{step2}, we find that 
 \[
 H_{\infty}(\gamma_{\infty}(s)) \leq \frac{H_{\infty}(\gamma_{\infty}(\tilde s))}{1 - \eta (s -  \tilde{s}) H_{\infty}(\gamma_\infty(\tilde s))} \leq (\eta(\rho_\ast - \tilde s))^{-1} < \infty
 \]
for $s \in (\tilde s, \rho_\ast)$. In particular, $\lim_{s \to \rho_\ast} H_{\infty}(\gamma_{\infty}(s)) < \infty$, which contradicts $\lim_{j \to \infty} Q_j^{-1} H(q_j, t_j) \to \infty$.

\underline{Step 3:} Next, we show that for all $s$ sufficiently close to $\rho_\ast$, the point $\gamma_{\infty}(s)$ lies at the center of a neck. Let 
\[
s_\ast := \max\Big\{ \frac{32C_1 \eta}{32C_1 \eta + 1} \rho_\ast\;,\; \rho_\ast - \frac{1}{32\eta}\Big\}. 
\]
Consider $s \in [s_\ast, \rho_\ast)$ so that $32C_1\eta(\rho_\ast - s) \leq s$.  Consider the point $x_j:= \gamma_j(Q_j^{-1}s)$. By Step 2, $H(x_j, t_j) \geq 4 Q_j$ if $j$ is sufficiently large. By property (iii), we can find a neighborhood of $x_j$ containing $B_{g(t_j)}(x_j, \frac{1}{2}K_0 H(x_j, t_j)^{-1})$ on which the immersion $F(\cdot , t_j)$ is $\varepsilon_0$-close to a time slice of an embedding of a model solution. Under the assumptions $K_0 \geq 16C_1$ and $\varepsilon_0 \ll \varepsilon$ (by a factor depending on the dimension) Definition \ref{necks_caps}, Definition \ref{varepsilon_close}, and Proposition \ref{model_nbhds} imply the point $x_j$ has a canonical neighborhood $U_j$ with the following properties:
\begin{enumerate}
\item[$\bullet$] $U_j$ is either a $2\varepsilon$-neck, a $2\varepsilon$-cap, or a closed manifold diffeomorphic to $S^n$.
\item[$\bullet$] $B_{g(t_j)}(x_j, (2C_1)^{-1} H(x_j, t_j)^{-1}) \subset U_j \subset B_{g(t_j)}(x_j, 2C_1H(x_j, t_j)^{-1})$.
\item[$\bullet$] In $U_j$, the mean curvature satisfies $(2C_2)^{-1} H(x_j, t_j) \leq H \leq 2C_2H(x_j, t_j)$.
\end{enumerate} 
Let us show $U_j$ must be a $2\varepsilon$-neck. Since $\mathbb M(s) < \infty$, the ratio $H(q_j, t_j)/H(x_j, t_j) \to \infty$ as $j \to \infty$. Therefore, $H(q_j, t_j) \geq 4C_2H(x_j, t_j)$ if $j$ is sufficiently large and it follows from the third item above that $q_j \not \in U_j$. The estimate in Step 2 gives $8C_1 H(\gamma_{\infty}(s))^{-1} \leq 16C_1\eta (\rho_\ast -  s) < s$, by definition of $s_\ast$. Hence $4C_1 H(x_j, t_j)^{-1} \leq Q_j^{-1} s  = d_{g(t_j)}(p_j, x_j)$ if $j$ is sufficiently large and it follows from the second item above that $p_j \not \in U_j$. These considerations evidently imply $U_j$ is not a closed manifold diffeomorphic to $S^n$. If $U_j$ is a $2\varepsilon$-cap, then the geodesic $\gamma_j$  enters and exits the cap. However, by definition, the boundary of a $2\varepsilon$-cap is a central cross-sectional sphere of a $2\varepsilon$-neck. If $\varepsilon$ is sufficiently small depending upon the dimension, this contradicts the fact that $\gamma_j$ is minimizing. 

In summary, for each $s \in [s_{\ast}, \rho_\ast)$, the point $\gamma_j(Q_j^{-1}s)$ has a canonical neighborhood which is a $2\varepsilon$-neck for $j$ sufficiently large (depending upon $s$). Moreover, on each $2\varepsilon$-neck, we have the improved gradient estimate $|\nabla H| \leq C(n)\varepsilon H^2$. Passing to the limit for each such $s \in [s_{\ast}, \rho_\ast)$, we conclude the point $\gamma_{\infty}(s)$ lies at the center of a $C(n)\varepsilon$-neck in $(B_{\infty}, g_{\infty})$. As in Step 2, the improved gradient estimate gives
\[
H_{\infty}(\gamma_{\infty}(s)) \geq (C(n)\varepsilon (\rho_\ast - s))^{-1}
\]
for $s \in [s_\ast, \rho_\ast)$. 

\underline{Step 4:} In this step and the next step, we follow the argument of Brendle in \cite{Bre18a}. By Step 3, every point $\gamma_{\infty}(s)$, for $s \in [s_\ast, \rho_\ast)$, lies at the center of an $C(n)\varepsilon$-neck. In the present context, a $C(n)\varepsilon$-neck is an extrinsic notion given by Definition \ref{necks_caps}, but the definition implies each point lies at the center of an intrinsic $C(n)\varepsilon$-neck in the sense used by Perelman in \cite{Per02b}. Let $U \subset B_{\infty}$ denote the connected region obtained by taking the union over all of the $C(n)\varepsilon$-necks centered at the points $\gamma_{\infty}(s)$ for $s \in [s_{\ast}, \rho_\ast)$.

The work of Hamilton \cite{Ham97} shows that each of these $C(n)\varepsilon$-necks admits a canonical foliation by constant mean curvature spheres (at least away from the boundary of the neck). Moreover, the foliations of overlapping necks must agree and can be joined together. Consequently, the domain $U$ admits a foliation by a one-parameter family of CMC spheres, which we denote by $\Sigma_u$. We can arrange the parameter $u$ so that the CMC spheres are defined for $u \in (0, u_\ast]$ and so that as $u \to 0$, the spheres $\Sigma_u$ move away from the point $p_{\infty}$ and towards the end of the geodesic. We let $v : \Sigma_u \to \mathbb{R}$ denote the lapse function of the foliation. We can parametrize the foliation $P : S^{n-1} \times (0, u_\ast] \to U$ so that $v = |\frac{\partial P}{\partial u}|_{g_{\infty}}$. We can also express the leaves of the foliation as the level sets of the projection $\pi : U \to (0, u_{\ast}]$. In this case, $v = |\nabla \pi |_{g_{\infty}}^{-1}$.  After reparametrization, we may assume the average of $v$ over $\Sigma_u$ is $1$. Note that $\sup_{\Sigma_u}|v -1| \leq C(n)\varepsilon$. Finally, let $\nu := -v^{-1} \frac{\partial P}{\partial u} = -v \nabla \pi$ denote the unit normal to the foliation.

For each $s \in [s_\ast, \rho_\ast)$, the point $\gamma_{\infty}(s)$ contained in a CMC sphere $\Sigma_{u(s)}$ where $u(s) := \pi(\gamma_{\infty}(s))$. We may assume $u(s_\ast) = u_{\ast}$. Since $\gamma_{\infty}$ is the limit of a sequence of length minimizing geodesics $\gamma_j$ on $2\varepsilon$-necks, we must have $1 - C(n)\varepsilon\leq g_{\infty}(\nu, \gamma_{\infty}') \leq 1$. Since
\[
\frac{du}{ds}= g_{\infty}(\nabla \pi, \gamma_{\infty}') = - v^{-1} g_{\infty}(\nu, \gamma_{\infty}'), 
\]
this gives $ |u'(s) + 1| \leq C(n)\varepsilon$ and $1 - C(n)\varepsilon \leq u(s)(\rho_\ast - s)^{-1}\leq 1 + C(n)\varepsilon$ for $s \in [s_\ast, \rho_\ast)$. Because each CMC sphere $\Sigma_{u}$ must be close to a round sphere on a $C(n) \varepsilon$-neck, we have the estimates
\[
\sup_{q \in \Sigma_{u}} \mathrm{scal}_{g_\infty}(q) \leq (1 + C(n)\varepsilon) \inf_{q \in \Sigma_{u}} \mathrm{scal}_{g_\infty}(q) 
\]
and 
\[
\frac{1}{C(n)} \big(\sup_{q \in \Sigma_u} \mathrm{scal}_{g_{\infty}}(q)\big)^{-\frac{n-1}{2}} \leq \mathrm{area}_{g_{\infty}}(\Sigma_u) \leq C(n) \big(\inf_{q \in \Sigma_u} \mathrm{scal}_{g_{\infty}}(q)\big)^{-\frac{n-1}{2}} 
\]
for $u \in (0, u_\ast]$.

Recall that the cylindrical estimate $|A_{\infty}|^2 \leq \frac{1}{n-1}H_{\infty}^2$ implies $H_{\infty}^2$ and $\mathrm{scal}_{g_{\infty}}$ are comparable. In particular, from Step 3 we obtain 
\[
(\rho_\ast - s)^2 \mathrm{scal}_{g_{\infty}}(\gamma_{\infty}(s)) \geq (C(n) \varepsilon)^{-2} > 0 
\]
for $s \in [s_\ast, \rho_\ast)$. Since $u(s)$ and $\rho_\ast - s$ are comparable, we get 
\[
u^2\inf_{q \in \Sigma_u} \mathrm{scal}_{g_{\infty}}(q) \geq (C(n) \varepsilon)^{-2} > 0
\] 
for $u \in (0, u_\ast]$. 
This implies 
\[
\frac{\mathrm{area}_{g_{\infty}}(\Sigma_u)}{u^{n-1}} \leq C(n) \varepsilon^{n-1} < \infty
\]
for $u \in (0, u_\ast]$.  In particular, this implies that $\mathrm{area}_{g_{\infty}}(\Sigma_u) \to 0$ as $u \to 0$. 

We will now consider the extrinsic geometry of each of the leaves $\Sigma_u$ as hypersurfaces within $(B_{\infty}, g_{\infty})$. Let $\mathcal H$ and $\mathcal A$ denote the scalar mean curvature and second fundamental form of $\Sigma_u$ with respect to $g_{\infty}$ (not to be mistaken for $H_{\infty}$ and $A_{\infty}$). For each $u$, the mean curvature $\mathcal H = \mathcal H(u)$ of $\Sigma_u$ is constant. As we are on a $C(n)\varepsilon$-neck, the ratio $\big(\sup_{q \in \Sigma_u} \mathrm{scal}_{g_{\infty}}(q)\big)^{-\frac{1}{2}}\mathcal H(u)$ is close to zero. 

Using the first variation formula for the mean curvature and the fact that $(B_{\infty}, g_{\infty})$ has nonnegative Ricci curvature, we obtain
\[
-\mathcal H'(u) = \Delta_{\Sigma_u} v + (|\mathcal A|^2 + \mathrm{Ric}_{g_{\infty}}(\nu, \nu))v \geq \Delta_{\Sigma_u} v  + \frac{1}{n-1} \mathcal H(u)^2 v.
\]
Taking the average over $\Sigma_u$ on both sides gives
\[
-\mathcal H'(u) \geq \frac{1}{n-1} \mathcal H(u)^2. 
\]
Now either $\mathcal H(u) \leq 0$ for all $u \in (0, u_\ast]$, or $\mathcal{H}(u_0)$ is positive for some $u_0 \in (0, u_\ast]$ (hence positive on $(0, u_0]$), and the differential inequality implies $(\frac{1}{\mathcal{H}(u)})' \geq \frac{1}{n-1}$ for $u \in (0, u_0]$. Integrating implies 
\[
0 < \mathcal H(u) \leq \frac{n-1}{u}
\]
for $u \in (0, u_0]$. In fact, we cannot have $\mathcal{H}(u) \leq 0$ for all $u$ since
\[
\frac{d}{du} \mathrm{area}_{g_{\infty}}(\Sigma_u) = \mathcal H(u) \int_{\Sigma_u} v  = \mathcal H(u)\,  \mathrm{area}_{g_{\infty}}(\Sigma_u), 
\]
and we know $\mathrm{area}_{g_{\infty}}(\Sigma_u) \to 0$ as $u \to 0$. In any case, we have
\[
\frac{d}{du}\big(u^{1-n} \mathrm{area}_{g_{\infty}}(\Sigma_u)\big) = u^{1-n} \mathrm{area}_{g_{\infty}}(\Sigma_u)\big(\mathcal H(u) - \frac{n-1}{u}\big) \leq 0. 
\]
Now because the function $u \mapsto u^{1-n} \mathrm{area}_{g_{\infty}}(\Sigma_u)$ is monotone decreasing, clearly 
\[
\liminf_{u \to 0} \frac{\mathrm{area}_{g_{\infty}}(\Sigma_u)}{u^{n-1}} > 0.
\]
This implies 
\[
\limsup_{u \to 0} \big(u^2\sup_{q \in \Sigma_u} \mathrm{scal}_{g_{\infty}}(q)\big)  < \infty. 
\]
To summarize, $u(s)^2 \mathrm{scal}_{g_{\infty}}(\gamma_{\infty}(s))$, and hence $u(s) H_{\infty}(\gamma_{\infty}(s))$, are bounded above and below as $s \to \rho_\ast$. Moreover, the ratio $u^{1-n}\mathrm{area}_{g_{\infty}}(\Sigma_u)$ is bounded and monotone, and therefore 
\[
\lim_{u \to 0} \frac{\mathrm{area}_{g_{\infty}}(\Sigma_u)}{u^{n-1}} = \kappa 
\]
for some positive constant $\kappa \in (0, C(n)\varepsilon^{n-1})$. In the next step, we will see that this essentially says the singularity at $\gamma_{\infty}(\rho_\ast)$ is modeled on a cone. For later use, let us choose a positive constant $L$ such that
\[
u \sup_{q \in \Sigma_u} H_{\infty}(q)  \leq L
\]
for $u \in (0, u_\ast]$. 

\underline{Step 5:} Now we can find a suitable sequence of rescalings of our original flow which converge to a local flow for which the final time slice is a piece of a metric cone. This will contradict the strong maximum principle. Here are the details. Choose a sequence of distances $s_{\ell} \in [s_\ast, \rho_\ast)$ with $s_{\ell} \to \rho_\ast$. Let 
\[
u_{\ell} := u(s_\ell),\qquad \hat{x}_{\ell} := \gamma_{\infty}(s_\ell) \in \Sigma_{u_\ell}.  
\]
Then $u_\ell \to 0$.  After passing to a subsequence, we can assume 
\[
u_\ell^{-1}(\rho_\ast - s_\ell)\to \hat \rho
\]
where $\hat{\rho}$ is a constant that is close to $1$. For $j$ sufficiently large, let 
\[
x_{\ell, j} := \gamma_j(Q_j^{-1} s_\ell) \in B_{g(t_j)}(p_j, Q_j^{-1} \rho_\ast)
\] 
so that $x_{\ell, j} \to \hat x_\ell$ as $j \to \infty$. Define 
\[
R_{\ell, j} := u_\ell Q_{j}^{-1}.
\]

Recall $\tilde g_j = Q_j^2 g(t_j)$ and the metrics $\tilde g_j$ converge to $g_{\infty}$. Now consider the rescaled metrics $\hat g_{\ell, j} := R_{\ell, j}^{-2} g(t_j) = u_\ell^{-2} \tilde g_{j}$ on the metric balls 
\[
\hat B_{\ell,j} := B_{\hat g_{\ell, j}}\Big(x_{\ell, j}, \frac{\rho_\ast - s_\ell}{2 u_\ell}\Big) = B_{\tilde g_{j}}\Big(x_{\ell,j}, \frac{\rho_\ast - s_\ell}{2}\Big) \subset B_{\tilde g_{j}}(p_{j}, \rho_\ast). 
\]
By our work in Step 1, $(\hat B_{\ell,j}, \hat g_{\ell, j})$ converges locally smoothly to $\big(B_{g_\infty}\big(\hat x_\ell, \frac{\rho_\ast - s_\ell}{2}\big), u_\ell^{-2} g_{\infty}\big)$ for each fixed $\ell$ as we let $j \to \infty$.  By our work in Step 3, if $j$ is large depending upon $\ell$, then every point $x \in \hat B_{\ell, j}$ lies at the center of a $C(n)\varepsilon$-neck and therefore on a canonical CMC sphere. So $\hat B_{\ell, j}$ is contained in a tube that is canonically foliated by CMC spheres. Because the foliation by CMC spheres is uniquely determined by the metrics (via the inverse function theorem) and because the metrics $\tilde g_j = Q_j^2 g(t_j)$ converge locally smoothly to $g_{\infty}$, each CMC sphere contained in $\hat B_{\ell, j}$ converges to a unique CMC sphere in $B_{g_\infty}\big(\hat x_\ell, \frac{\rho_\ast - s_\ell}{2}\big)$ as we let $j \to \infty$. It follows from comparability of $u(s)$ and $\rho_\ast - s$ that $B_{g_\infty}\big(\hat x_\ell, \frac{\rho_\ast - s_\ell}{2}\big) \subset \bigcup_{u \in (\frac{1}{3} u_\ell, \frac{5}{3} u_\ell)} \Sigma_u$ and conversely, if $ \frac{2}{3}u_\ell < u < \frac{4}{3}u_\ell$, then $\Sigma_u \subset B_{g_\infty}\big(\hat x_\ell, \frac{\rho_\ast - s_\ell}{2}\big)$. 

Let $\hat \Sigma^{(\ell, j)}_{\hat u}$ denote the one-parameter family of CMC spheres that foliate $\hat{B}_{\ell, j}$. After a translation, choice of sign, and suitable reparametrization (so that the lapse function $\hat v^{(\ell, j)}$ has average 1 on each CMC sphere) the foliation is defined at least for $\hat u \in (\frac{1}{3}, \frac{5}{3})$ with $\hat \Sigma^{(\ell, j)}_{\hat u} \subset \hat B_{\ell, j}$ if $\hat u \in (\frac{2}{3}, \frac{4}{3})$ and $\hat B_{\ell, j} \subset \bigcup_{\hat u \in (\frac{1}{3}, \frac{5}{3})} \Sigma^{(\ell, j)}_{\hat u}$. In particular, these choices determine the parameter $\hat u$ and consequently $\hat \Sigma^{(\ell, j)}_{\hat u}$ converges to $\Sigma_{u_{\ell}\hat{u}}$ as $j \to \infty$ for $\hat u \in (\frac{1}{3}, \frac{5}{3})$. Therefore
\begin{align*}
\lim_{j \to \infty} \frac{\mathrm{area}_{\hat g_{\ell, j}}(\hat \Sigma^{(\ell, j)}_u)}{\hat u^{n-1}} &= \frac{\mathrm{area}_{u_\ell^{-2} g_{\infty}}(\Sigma_{u_\ell \hat u})}{\hat u^{n-1}} = \frac{\mathrm{area}_{g_{\infty}}(\Sigma_{u_\ell \hat u })}{(u_\ell \hat u)^{n-1}},\\
\lim_{\ell \to \infty} \frac{\mathrm{area}_{g_{\infty}}(\Sigma_{u_\ell \hat u })}{(u_\ell \hat u)^{n-1}} &= \kappa. 
\end{align*}

Set $\tau := - \frac{1}{288 \eta L^2}$. Finally, consider the sequence of flows $\hat F_{\ell, j} : \hat B_{\ell, j} \times (\tau, 0] \to \mathbb{R}^N$ defined by
\[
\hat F_{\ell,j}(p, t) = R_{\ell, j}^{-1} \big(F(p, t_j + R_{\ell, j}^2 t) - F(x_\ell, t_j)\big). 
\]
For each $p \in \hat B_{\ell, j}$, we can find $\hat u \in (\frac{1}{3}, \frac{5}{3})$ with $p \in \Sigma^{(\ell, j)}_{\hat u}$. Assuming that $j$ is large enough depending upon $\ell$, it follows from our work in Step 4 that 
\[
Q_j^{-1}H(p, t_j) \leq  2 L u_\ell^{-1} \hat u^{-1} \leq 6 L u_\ell^{-1}. 
\]
For $t \in (\tau, 0]$, this implies that
\[
R_{\ell, j}^2 (-t) < R_{\ell, j}^2 (-\tau) = \frac{1}{288 \eta}  u_\ell^2 Q_j^{-2} L^{-2}  \leq \frac{1}{8 \eta H(p, t_j)^2}. 
\]
So we may use the short-range curvature estimates in Step 2 to obtain
\[
R_{\ell, j} H(p, t_j+ R_{\ell,j}^2 t) \leq  4 R_{\ell,j} H(p, t_j) \leq 24 L
\]
for $(p, t) \in \hat B_{\ell, j} \times (\tau, 0]$. The estimate above implies we have uniform bounds for the second fundamental forms of the flows $\hat F_{\ell, j}$ on the domains $\hat{B}_{\ell, j} \times (\tau, 0]$ assuming $j$ is sufficiently large depending upon $\ell$. For a suitable diagonal subsequence, we can now apply Corollary \ref{flow_compactness} to obtain a locally defined solution of mean curvature flow $\hat F_{\infty}$ in $\mathbb{R}^{n+1}$, which is defined on a parabolic neighborhood $\hat B_{\infty} \times (\tau, 0]$, where $\hat B_\infty = B_{\hat g_{\infty}}(\hat x_{\infty}, \frac{1}{2} \hat \rho)$. If along the subsequence $j$ is taken sufficiently large for each $\ell$, then the CMC foliations of $\hat B_{\ell, j}$ converge to a CMC foliation $\hat \Sigma_{\hat u}$ of suitable domain $\hat U$ in $\hat B_{\infty}$ with the property that 
\[
\frac{\mathrm{area}_{\hat g_{\infty}}(\hat \Sigma_{\hat u})}{\hat u^{n-1}} = \kappa
\]
independent of $\hat{u}$.

Let $\hat v$ and $\hat \nu$ denote the lapse function and inward-pointing unit normal of the foliation $\hat \Sigma_{\hat u}$. If $\hat{ \mathcal H}(\hat u)$ and $\hat{\mathcal{A}}$ denote the mean curvature and second fundamental form of $\hat \Sigma_{\hat u}$ with respect to $\hat g_{\infty}$, then the identities in Step 4 imply $\hat{\mathcal H}(\hat u) = \frac{n-1}{\hat u}$, $-\hat{\mathcal H}'(\hat u) =  \frac{1}{n-1} \hat{\mathcal H}(\hat u)^2$, $|\hat {\mathcal{A}}|^2 = \frac{1}{n-1} \hat{\mathcal H}(\hat u)^2$, and $\mathrm{Ric}_{\hat g_{\infty}}(\hat \nu, \hat \nu) = 0$. In particular, $\Delta_{\hat{\Sigma}_{\hat{u}}} (\hat{v} -1)+ \frac{n-1}{\hat{u}^2}(\hat{v}-1) =0$. The sphere $\hat{\Sigma}_{\hat{u}}$ is $C(n)\varepsilon$-close to a round sphere of radius $\lesssim \varepsilon \hat{u}$. This implies the operator $\Delta_{\hat{\Sigma}_{\hat{u}}} + \frac{n-1}{\hat{u}^2}$ is a small perturbation of $\Delta_{S^{n-1}}$, after dilating by $\varepsilon^{-1}\hat{u}^{-1}$. Because the average of $\hat{v} - 1$ is zero, this in turn implies $\hat v \equiv 1$, and hence $\hat U$ must be a piece of metric cone. Note, the estimate $\hat u^{1-n}\mathrm{area}_{\hat g_{\infty}}(\hat \Sigma_{\hat u}) \leq C(n)\varepsilon^{n-1}$ implies opening angle of the cone must be very small.  

Since the solution $\hat F_{\infty}$ is codimension 1, weakly convex, and we have $\mathrm{Ric}_{\hat g_{\infty}}(\hat \nu, \hat \nu) = 0$, the Gauss equation implies that the first eigenvalue of the second fundamental form of $F_{\infty}$ must vanish at time $t = 0$. By the strong maximum principle, this implies the solution locally splits a line (in fact, it implies $\hat{\nu}$ is parallel on $\hat{U}$). In particular, the metric induced by the immersion is locally a product, which is not compatible with the previous conclusion that geometry at time zero is conical (e.g. we have seen that $\mathrm{area}_{\hat g_{\infty}}(\hat \Sigma_{\hat u}) = \kappa \hat u^{n-1}$, but on a product $\mathrm{area}_{\hat g_{\infty}}(\hat \Sigma_{\hat u})$ is constant). This gives us the desired contradiction (we also refer the reader to Appendix A in \cite{HK17a}). We conclude 
\[
\mathbb M(\rho) :=  \limsup_{j\to \infty}  \sup_{p \in B_{g(t_j)}(p_j, Q_j^{-1}\rho )} Q_j^{-1} H(p, t_j) < \infty
\]
for each $\rho > 0$. This completes the proof of the long-range curvature estimate. 
\end{proof}

We can now extract a complete limit and show it must have bounded curvature. Let $\tilde F_j : M \times [-Q_j^2t_j, 0] \to \mathbb{R}^N$ be the rescaled flows given by 
\[
\tilde F_j(p, t) := Q_j\big(F(p, t_j + Q_j^{-2}t) - F(p_j, t_j)\big). 
\]
Let $\tilde{g}_j= Q_j^2 g(t_j)$ be the corresponding metrics at time zero.  

\begin{lemma}[extracting a complete limit at time zero]\label{step4}
After passing to a subsequence: 
\begin{itemize}
\item The sequence $(M, \tilde g_j, p_j)$ converges smoothly in the pointed Cheeger-Gromov sense to a complete Riemannian manifold $(M_{\infty}, g_{\infty}, p_{\infty})$.
\item The sequence of immersions $\tilde F_j(\cdot, 0)$ converges smoothly to a pointed isometric immersion $F_{\infty} : (M_{\infty}, g_{\infty}, p_{\infty}) \to (\mathbb{R}^N, g_{\mathrm{flat}}, 0)$, which is codimension one and satisfies the estimates $|\nabla A_{\infty}| \leq \eta H_{\infty}^2$, $|\nabla^2 A_{\infty}| \leq \eta H_{\infty}^3$, and $|A_{\infty}|^2 \leq \frac{1}{n-1}H_{\infty}^2$. 
\item Either $M_{\infty}$ is a closed manifold diffeomorphic to $S^n$ or every point $p \in M_{\infty}$ where $H_{\infty}(p) \geq 8$ has a neighborhood that is either a $2\varepsilon$-neck or a $2\varepsilon$-cap. Consequently $(M_{\infty}, g_{\infty})$ has nonnegative and bounded curvature. 
\end{itemize}
\end{lemma}

\begin{proof}
By Proposition \ref{step3}, for every $\rho \in (0, \infty)$ there exists a constant $C(\rho)$ such that $Q_j^{-1}H(p, t_j) \leq C(\rho)$  if $d_{g(t_j)}(p, p_j) < \rho Q_j^{-1}$ (for all $j$ large).  Hence by Lemma \ref{step2}, for every $\rho \in (0, \infty)$ and every integer $k = 0, 1, \dots$, there exists a positive constant $C(n, k, \eta, \rho)$ such that we have $Q_j^{-k-1} |\nabla^k A|(p, t_j) \leq C(n, k, \eta, \rho)$ if $d_{g(t_j)}(p, p_j) < \rho Q_j^{-1}$ (for all $j$ large). In other words, for each of the rescaled pointed isometric immersions $\tilde F_j(\cdot, 0)$ we have uniform estimates for the second fundamental form and each of its derivatives at bounded distance (for all $j$ large). After passing to a subsequence, Proposition \ref{immersion_compactness} implies convergence to $F_{\infty} : (M_{\infty}, g_{\infty}, p_{\infty}) \to (\mathbb{R}^N, g_{\mathrm{flat}}, 0)$. 

By Lemma \ref{step1}, in the limit we have $|\nabla A_{\infty}| \leq \eta H_{\infty}^2$, $|\nabla^2 A_{\infty}| \leq \eta H_{\infty}^3$, and $|A_{\infty}|^2 \leq \frac{1}{n-1}H_{\infty}^2$. The planarity estimate \cite{Naf19a} implies that $F_{\infty}(M_{\infty})$ is contained in an $(n+1)$-dimensional affine subspace of $\mathbb{R}^N$. In codimension one, the estimate $|A_{\infty}|^2 \leq \frac{1}{n-1}H_{\infty}^2$ implies the limit has nonnegative sectional curvature.

By condition (iii) and Proposition \ref{model_nbhds}, either $M_{\infty}$ is a closed manifold diffeomorphic to $S^n$ or every point $p \in M_{\infty}$ where $H_{\infty}(p) \geq 8$ has a canonical neighborhood which is either a $2\varepsilon$-neck or a $2\varepsilon$-cap. Note that if a point $p \in M_{\infty}$ lies on a $2\varepsilon$-cap, then the estimates in Proposition \ref{model_nbhds} imply the cap is incident to a $2\varepsilon$-neck of radius bounded by $C(n)C_2 H_{\infty}(p)^{-1}$. Thus, if the curvature of $(M_{\infty}, g_{\infty})$ is unbounded, then the limit must contain a sequence of $2\varepsilon$-necks of radii tending to zero. However, by an argument of Perelman, this is impossible in a complete Riemannian manifold with nonnegative sectional curvature. See Proposition 2.2 in \cite{ChZ06} for a detailed version of the argument. Consequently, $(M_{\infty}, g_{\infty})$ has bounded curvature. 
\end{proof}

\begin{lemma}[extending the limit backwards in time]\label{step5}
There exists $\tau^\ast \in [-\infty, 0)$ such that the limit immersion $F_{\infty} : (M_{\infty}, g_{\infty}, p_{\infty}) \to (\mathbb{R}^N, g_{\mathrm{flat}}, 0)$ of Lemma \ref{step4} can be maximally extended to a codimension one solution of mean curvature flow $F_{\infty} : M_{\infty} \times (\tau^\ast, 0] \to \mathbb{R}^{N}$ with bounded curvature, and $F_{\infty}(\cdot, t)$ satisfies the same estimates and canonical neighborhood property as $F_{\infty}(\cdot, 0)$.

Moreover, the immersions $\tilde{F}_j$ converge smoothly to $F_{\infty}$ on compact subsets of $M_{\infty} \times (\tau^\ast, 0]$. 
\end{lemma}

\begin{proof}
By Lemma \ref{step4}, the mean curvature of the immersion $F_{\infty}$ is bounded from above by a constant $\Lambda > 8$. Since the sequence of immersions $\tilde F_j(\cdot, 0)$ converges locally smoothly to $F_{\infty}$, for every $\rho > 1$, we have 
\[
\limsup_{j \to \infty} \sup_{p \in B_{g(t_j)}(p_j, \rho Q_j^{-1})} Q_j^{-1} H(p, t_j) \leq 2\Lambda. 
\]
The short-range curvature estimates of Lemma \ref{step2} then imply that for every $\rho > 1$,
\[
\limsup_{j \to \infty} \sup_{(p,t) \in \hat P(p_j, t_j, \rho, \frac{1}{32 \eta \Lambda^2})} Q_j^{-1}H(p, t) \leq 8 \Lambda, 
\]
where recall $\hat P(p_j, t_j, \rho, \frac{1}{32 \eta \Lambda^2}) = B_{g(t_j)}(p_j, \rho Q_j^{-1}) \times [t_j - \frac{1}{32\eta\Lambda^2}Q_j^{-2}, t_j]$. 

If we take $\tau_1 := -\frac{1}{64 \eta \Lambda^2}$ and $j$ sufficiently large, then for every $\rho > 1$, we have uniform estimates for the second fundamental form of $\tilde F_j$ on the parabolic neighborhood $P(p_j, 0, \rho, \tau_1)$. Note these uniform estimates are independent of $\rho$. By Corollary \ref{flow_compactness}, a subsequence of these flows converges to a complete solution of mean curvature flow $F_{\infty}(\cdot, t)$ defined for $t \in [\tau_1, 0]$ with $F_{\infty}(\cdot, 0) = F_{\infty}$. Moreover, the limiting solution satisfies 
\[
\Lambda_1:= \sup_{(p,t) \in M_{\infty} \times [\tau_1, 0]} H(p, t) \leq 8 \Lambda.
\]
Now set $\tau_2 := \tau_1 - \frac{1}{64 \eta \Lambda_1^2}$. Using the short-range curvature estimates once more and passing to a further subsequence, we can extend the solution $F_{\infty}(\cdot, t)$ to the interval $t \in [\tau_2, 0]$ and the solution will satisfy $\Lambda_2 := \sup_{(p,t) \in M_{\infty} \times [\tau_2, 0]} H(p, t) \leq 8 \Lambda_1$. Analogously, for each $m \geq 1$, with $\tau_{m +1} = \tau_m - \frac{1}{64\eta \Lambda_m^2}$ and $\Lambda_{m+1} := \sup_{(p,t) \in M_{\infty} \times [\tau_{m+1}, 0]} H(p, t) \leq 2 \Lambda_m$, we get a complete solution of mean curvature flow $F_{\infty} : M_{\infty} \times [\tau_m, 0] \to \mathbb{R}^N$. 

Let $\tau^\ast := \lim_{m \to \infty} \tau_m$. Taking the limit along a suitable diagonal sequence of the flows $\tilde F_j$, we obtain a complete solution of mean curvature flow $F_{\infty} : M_{\infty} \times (\tau^\ast, 0] \to \mathbb{R}^N$. By construction, the solution has bounded mean curvature for each $t \in (\tau^\ast, 0]$. The solution satisfies the estimates $|\nabla A_{\infty}| \leq \eta H_{\infty}^2$, $|\nabla^2 A| \leq \eta H_{\infty}^3$, and $|A_{\infty}|^2 \leq \frac{1}{n-1} H_{\infty}^2$. By the planarity estimate, the solution is contained in an affine $(n+1)$-dimensional subspace of $\mathbb{R}^{N}$, so without loss of generality we may write $F_{\infty} : M_{\infty} \times (\tau^\ast, 0] \to \mathbb{R}^{n+1}$. Finally, by property (iii) together with Proposition \ref{model_nbhds}, any spacetime point $(p, t) \in M_{\infty} \times (\tau^\ast, 0]$ where $H_{\infty}(p, t) \geq 8$ has a canonical neighborhood which is either a $2\varepsilon$-neck, a $2\varepsilon$-cap, or a closed manifold diffeomorphic to $S^n$.   
\end{proof}

In this final lemma, we show that $\tau^\ast = -\infty$ using Hamilton's Harnack inequality for mean curvature flow. 

\begin{lemma}[showing the limit must be ancient]\label{step6}
If the flow $F_{\infty} : M_{\infty} \times (\tau^\ast, 0] \to \mathbb{R}^{N}$ constructed in Lemma \ref{step5} is extended maximally backwards in time, then $\tau^\ast = -\infty$.
\end{lemma}

\begin{proof}
Suppose $\tau^\ast > - \infty$. Recall $\tau_m$ and $\Lambda_m$ from the previous proof. This implies $\lim_{m \to \infty} (\tau_{m+1} - \tau_m) \to 0$ and consequently $\lim_{m \to \infty} \Lambda_m = \infty$. Thus, as we go backward in time to $\tau^\ast$, the mean curvature of $F_{\infty}(\cdot, t)$ blows up.

By the Harnack inequality,  $(t-\tau^\ast)^{\frac{1}{2}}H_{\infty}(p,t)$ is nondecreasing for each $p \in M_\infty$. Since $H_{\infty}(p, 0) \leq \Lambda$, this gives  
\[
H_{\infty}(p, t) \leq \sqrt{\frac{-\tau^\ast}{t - \tau^\ast}} \Lambda
\]
for all $t \in (\tau^\ast, 0]$ and $p \in M_\infty$. Applying Lemma \ref{dist_est}, we get
\[
0 \leq - \frac{d}{dt} d_{g_{\infty}(t)}(p, q) \leq C(n) \sqrt{\frac{-\tau^\ast}{t - \tau^\ast}} \Lambda
\]
for all $t \in (\tau^\ast, 0]$ and $p, q \in M_{\infty}$. The key point is that the right-hand side of the inequality above is integrable in $t$.  Integrating this inequality gives 
\[
d_{g_{\infty}(0)}(p, q) \leq d_{g_{\infty}(t)}(p,q) \leq d_{g_{\infty}(0)}(p, q) + C(n)(-\tau^\ast)\Lambda
\]
for all times $t \in (\tau^\ast, 0]$ and all points $p, q \in M_{\infty}$. 

By our rescaling procedure, clearly $H_{\infty}(p_{\infty}, 0) = 1$. The maximal principle implies the infimum of the mean curvature is nondecreasing and hence
\[
\inf_{p \in M_{\infty}} H_{\infty}(p, t) \leq \inf_{p \in M_{\infty}} H_{\infty}(p, 0) \leq 1
\]
for all $t \in (\tau^\ast, 0]$. It follows that we can find a point $q_{\infty} \in M_{\infty}$ where $H_{\infty}(q_{\infty}, t) \leq 2$ for $t = \tau^\ast + \frac{1}{64\eta}$. By the short-range curvature estimates of Lemma \ref{step2}, $H_{\infty}(q_{\infty}, t) \leq 8$ for $t \in (\tau^\ast, \tau^\ast + \frac{1}{64\eta}]$. In particular, if $m$ is sufficiently large, this implies $H_{\infty}(q_{\infty}, \tau_m) \leq 8$.  We claim
\[
\limsup_{m \to \infty} \sup_{p \in B_{g_{\infty}(\tau_m)}(q_{\infty}, \rho)} H_{\infty}(p, \tau_m) < \infty
\] 
for every $\rho > 1$. This follows from the argument used to prove Proposition \ref{step3}. For the proof there to work, we only need the pointwise derivative estimates and condition (iii). Both of these properties are satisfied by the limit. Since the mean curvature is bounded at bounded distance, the sequence of immersions $\hat F_m : (M_{\infty}, g_{\infty}(\tau_m), q_{\infty}) \to (\mathbb{R}^{n+1}, g_{\mathrm{flat}}, 0)$, where $\hat F_m(p) = F_{\infty}(p, \tau_m) - F_{\infty}(q_{\infty}, \tau_m)$, subsequentially converges to a smooth limit. By the argument in Lemma \ref{step4}, this limit has bounded curvature. It follows that there exists a constant $\Lambda_\ast > \Lambda$, independent of $\rho$, such that 
\[
\liminf_{m \to \infty} \sup_{p \in B_{g_{\infty}(\tau_m)}(q_{\infty}, \rho)} H_{\infty}(p, \tau_m) \leq \Lambda_\ast, 
\]
for every $\rho > 1$. The geodesic balls $B_{g_{\infty}(\tau_m)}(q_{\infty}, \rho)$ may change in size as $\tau_m \to \tau^\ast$. By our distance estimate, however, if $\rho$ is sufficiently large, then
\[
B_{g_{\infty}(0)}(q_{\infty}, \rho - C(n)(-\tau^\ast)\Lambda) \subset B_{g_{\infty}(\tau_m)}(q_{\infty}, \rho). 
\]
Consequently, 
\[
\liminf_{m \to \infty} \sup_{p \in B_{g_{\infty}(0)}(q_{\infty}, \rho)} H_{\infty}(p, \tau_m) \leq \Lambda_\ast, 
\]
for every $\rho > 1$. 

In summary, for every $\rho > 1$, we can find a large integer $m$ such that 
\[
 \sup_{p \in B_{g_{\infty}(0)}(q_{\infty}, \rho)} H_{\infty}(p, \tau_m) \leq 2\Lambda_\ast. 
\]
By the short-range curvature estimates (both forwards and backwards in time), taking $m$ sufficiently large, this implies 
\[
\sup_{t \in (\tau^\ast, \tau^\ast + \frac{1}{16\eta\Lambda_\ast^2}]} \sup_{p \in B_{g_{\infty}(0)}(q_{\infty}, \rho)} H_{\infty}(p, t) \leq 8\Lambda_\ast. 
\]
Since $\Lambda^\ast$ is independent of $\rho$, this gives
\[
\sup_{t \in (\tau^\ast, \tau^\ast + \frac{1}{16\eta\Lambda_\ast^2}]} \sup_{p \in M_{\infty}} H_{\infty}(p, t) \leq 8\Lambda_\ast. 
\]
This contradicts our observation that $\lim_{m \to \infty} \Lambda_m = \infty$. Therefore, we must have $\tau^\ast = -\infty$. 
\end{proof}

From the derivative estimates, the planarity estimate, the cylindrical estimate, and the main result of \cite{Naf19b}, the solution $F_{\infty} : M_{\infty} \times (-\infty, 0] \to \mathbb{R}^N$ is an ancient model solution. 

In conclusion, a subsequence of flows obtained by rescaling the solution $F$ around the points $(p_j, t_j)$ by $H(p_j, t_j)$ must converge to an ancient model solution. This, of course, contradicts property (ii), and thereby completes the proof of Theorem \ref{canonical-nbhd-thm}.


\section{Appendix: Local Compactness for Solutions of Mean Curvature Flow}

This appendix is meant to serve as a reference for the reader on a local compactness result for mean curvature flow given uniform local curvature estimates. This type of result is well-known and often used by experts. Our goal is to make precise the local compactness results we use in our proof of the canonical neighborhood theorem, as well as provide sufficiently many details for the proofs of these results. 

In \cite{Breu15}, building on work of Langer in \cite{Lan85}, Breuning proves a local compactness result for immersions that is close to what we desire here. The key idea behind the result, originally due to Langer, is that estimates for the second fundamental form, and its derivatives, yield estimates for the immersion and its derivatives when the immersion is given as a graph. The result of \cite{Breu15}, however, requires  estimates for the second fundamental form in extrinsic Euclidean balls rather than in intrinsic geodesic balls, which are more natural here. The result we are after is therefore closer in spirit Hamilton's compactness result \cite{Ham95b} for the Ricci flow and its local adaptation by Cao and Zhu in \cite{CaZ06}. We found their work to be a good reference for this type of result and our compactness theorem will be a corollary of theirs. 

First, a word on notation. Suppose that $F : (M,g) \to (\mathbb{R}^N, g_{\mathrm{flat}})$ is an isometric immersion. As usual, we let $\langle \cdot\,,  \cdot \rangle$ also denote that flat metric on $\mathbb{R}^N$. Let $\bar g$ be any other metric on $M$ and let $\nabla_{\bar g}$ denote the corresponding covariant derivative. To estimate derivatives of $F$ with respect to $\bar g$, we will think of the immersion $F$ as a tuple of $\mathbb{R}$-valued functions $F = (F^1, \dots, F^N)$ and use the notation 
\[
|\nabla^k_{\bar g} F|^2_{\bar g} := \sum_{m = 1}^N |\nabla^k_{\bar g} F^m|^2_{\bar g}
\]
where $|\nabla^k_{\bar g} F^m|_{\bar g}$ denotes the usual norm of the $(0, k)$-tensor $\nabla_{\bar g}^k F^m$ for each $m$. We will similarly view the second fundamental form $A$ as an $\mathbb{R}^N$-valued $(0, 2)$-tensor and the mean curvature vector $\vec H$ as an $\mathbb{R}^N$-valued function. This means we view $A$ as a section of $T^{\ast} M^{\otimes 2} \otimes F^\ast \mathbb{R}^N$ rather than as a section of $T^{\ast} M^{\otimes 2} \otimes NM$. Let $\nabla = \nabla_g$ and $\nabla^\perp = \nabla^\perp_g $ denote the induced connections on $T^\ast M^{\otimes k} \otimes F^\ast \mathbb{R}^N$ and $T^\ast M^{\otimes k} \otimes NM$ respectively. Acting on the second fundamental form, these derivatives are related in local coordinates $x_1, \dots, x_n$ on $M$ by 
\begin{align*}
\nabla_i A_{jk} &=  \frac{\partial}{\partial x_i} A_{jk}- \Gamma_{ij}^l A_{lk} - \Gamma_{ik}^l A_{jl}; \\
\nabla_i^\perp A_{jk} &= (\nabla_i A_{jk})^\perp = \nabla_i A_{jk} - \sum_{l =1}^n \langle A_{jk}, A_{il} \rangle e_l, 
\end{align*}
where $e_1, \dots, e_n$ is a local orthonormal frame for $TM$ (the equalities above hold as sections of $F^\ast \mathbb{R}^N = TM \oplus NM$). In particular, this means that $|\nabla A|^2 = |\nabla^\perp A|^2 + |\langle A, A \rangle|^2$ (where $\langle A, A \rangle$ is a $(0, 4)$-tensor). It is perhaps more natural to assume bounds on $|(\nabla^\perp)^k A|$ than it is to assume bounds on $|\nabla^k A|$. However, taking derivatives of the second identity above, we can estimate 
\[
|\nabla^k A| \leq |(\nabla^\perp)^k A| + C_k \sum_{p + q = k -1} |(\nabla^\perp)^p A||(\nabla^\perp)^q A| + C_k \sum_{p + q + r = k -1}  |(\nabla^\perp)^p A| |(\nabla^\perp)^q A| |(\nabla^\perp)^r A|.
\]
for a constant $C_k = C_k(n,N)$. Thus, estimates for $|\nabla^k A|$ for $0 \leq k \leq \bar k$ are equivalent to estimates for $|(\nabla^\perp)^k A|$ for $0 \leq k \leq \bar k$ up to some constants. In the lemmas and propositions below, we will assume estimates on $|\nabla^k A|$ rather than on $|(\nabla^\perp)^k A|$. In part, this is because $\nabla^{k +2} F = \nabla^k A$ as sections of $T^\ast M^{\otimes k +2} \otimes F^\ast \mathbb{R}^N$. However, we note that uniform bounds for $|\nabla^k F| = |\nabla_{F^\ast g_{\mathrm{flat}}}^k F|_{F^\ast g_{\mathrm{flat}}}$ (a quantity that is invariant under reparametrization) along a sequence of immersions is not enough to deduce compactness via Arzela-Ascoli. We must obtain estimates with respect to a fixed background metric.

We will reserve the notation $|\nabla^k A| = |\nabla_g^k A|_{g}$ to denote the norm of derivatives of the curvature with respect to the metric $g = F^\ast g_{\mathrm{flat}}$ induced by the immersion. For any other fixed background metric $\bar g$, we use the notation $|\nabla^k_{\bar g} A |_{\bar g}$. Estimates with respect to a fixed background metric are needed for extracting limits, but estimates for the second fundamental form are usually obtained with respect to the metric induced by the immersion.

\begin{definition}[cf. Definition 4.1.1 in \cite{CaZ06}]
Let $(M_j, g_j, p_j)$ be a sequence of pointed, complete $n$-dimensional Riemannian manifolds and suppose for each $j$ that $F_j :  (M_j, g_j, p_j) \to (\mathbb{R}^{N}, g_{\mathrm{flat}}, 0)$ is a pointed isometric immersion.  Let $B_j:= B_{g_j}(p_j, \rho_j) \subset M_j$ denote the open geodesic ball centered at $p_j \in M_j$ of radius $\rho_j \in (0,   \infty]$. Suppose $\rho_j \to \rho_\ast \in (0, \infty]$. Let $(B_{\infty}, g_{\infty}, p_{\infty})$ be a (possibly incomplete) Riemannian manifold such that $B_{\infty} = B_{g_{\infty}}(p_{\infty}, \rho_\ast)$, the open geodesic ball centered at $p_{\infty}$ of radius $\rho_\ast$ with respect to $g_{\infty}$. We say the sequence of pointed immersions $F_j|_{B_j}$ converges in $C^{\infty}_{\mathrm{loc}}$ to a pointed isometric immersion $F_{\infty} : (B_{\infty}, g_{\infty}, p_{\infty}) \to (\mathbb{R}^{N}, g_{\mathrm{flat}}, 0)$ if:
\begin{enumerate}
\item[$\bullet$] We can find a sequence of smooth, relatively compact open sets $U_j$ in $B_{\infty}$ satisfying $p_{\infty} \in U_j$, $U_j \subset U_{j+1}$, and $\cup_{j} U_j = B_{\infty}$.
\item[$\bullet$] We can find a sequence of diffeomorphisms $\phi_j : U_j \to \phi_j(U_j) \subset B_j$ satisfying $\phi_j(p_{\infty}) = p_j$. Moreover, for all $\rho \in (0, \rho_\ast)$, if $j$ is sufficiently large, then $B_{g_j}(p_j, \rho) \subset \phi_j(U_j)$. 
\item[$\bullet$] The sequence of immersions $F_j \circ \phi_j$ converges to $F_{\infty}$ smoothly with respect to $g_{\infty}$ on every compact subset of $B_{\infty}$. 
\end{enumerate}
\end{definition} 

To be specific, in the definition above, the sequence $F_j \circ \phi_j : U_j \to \mathbb{R}^N$ converges to $F_{\infty} : B_{\infty} \to \mathbb{R}^N$ smoothly with respect to $g_{\infty}$ on compact subsets as $\mathbb{R}^N$-valued functions. This means that for every nonnegative integer $\bar k$, every compact subset $K \subset B_{\infty}$, and every positive real number $\varepsilon > 0$, there exists $j_0 := j_0(\bar k, K, \varepsilon)$ such that $K \subset U_{j_0}$ and, for $j \geq j_0$, 
\[
\sup_K \sum_{k =0}^{\bar k} \big|\nabla_{g_{\infty}}^k\big((F_j \circ \phi_j) - F_{\infty}\big) \big|_{g_{\infty}}^2 < \varepsilon^2. 
\]
In particular, the pullback metrics $\phi_j^\ast g_j = (F_j \circ \phi_j)^\ast g_{\mathrm{flat}}$ converge smoothly on compact subsets of $B_{\infty}$ to $g_{\infty} = F_{\infty}^\ast g_{\mathrm{flat}}$. If $\rho_\ast = \infty$, then this means the sequence $(M_j, g_j, p_j)$ converges in the traditional pointed Cheeger-Gromov sense to a complete Riemannian manifold $(M_{\infty}, g_{\infty}, p_{\infty})$. 

The following proposition is the analogue of Theorem 4.1.2 in \cite{CaZ06}.

\begin{proposition}[Local compactness of pointed immersions]\label{immersion_compactness}
Let $(M_j, g_j, p_j)$ be a sequence of pointed, complete $n$-dimensional Riemannian manifolds and suppose for each $j$ that $F_j : (M_j, g_j, p_j) \to (\mathbb{R}^{N}, g_{\mathrm{flat}}, 0)$ is a pointed isometric immersion. Consider a sequence of radii $\rho_j \in (0, \infty]$ such that $\rho_j \to \rho_\ast \in (0, \infty]$ and let $B_j := B_{g_j}(p_j, \rho_j)$. Suppose that for every radius $0 < \rho < \rho_\ast$ and every integer $k \geq 0$, there exists a constant $\Lambda_k(\rho)$, independent of $j$, and a positive integer $j_0(k, \rho)$ such that for every $j \geq j_0(k, \rho)$ the $k$\textsuperscript{th} covariant derivative of the second fundamental form $A_j$ of the immersion $F_j$ satisfies the pointwise estimate
\[
\sup_{B_{g_j}(p_j, \rho)} |\nabla^k A_j|_{g_j} \leq \Lambda_k(\rho). 
\]
Then there exists a subsequence of the immersions $F_j|_{B_j}$ which converges in $C^{\infty}_{\mathrm{loc}}$ to a pointed isometric immersion $F_{\infty} : (B_{\infty}, g_{\infty}, p_{\infty}) \to (\mathbb{R}^{N}, g_{\mathrm{flat}}, 0)$ of an open geodesic ball $B_{\infty} = B_{g_{\infty}}(p_{\infty}, \rho_\ast)$. If $\rho_\ast = \infty$, then the limiting Riemannian manifold is complete.
\end{proposition}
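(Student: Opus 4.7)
The plan is to combine a Cheeger-Gromov-type compactness for the intrinsic Riemannian manifolds $(M_j, g_j, p_j)$ with an Arzelà-Ascoli argument for the pointed immersions $F_j$ themselves, following the template of Theorem 4.1.2 in \cite{CaZ06}. The Gauss equation and its successive covariant derivatives convert the hypothesized bounds on $|\nabla^k A_j|_{g_j}$ into uniform bounds on $|\nabla^k \mathrm{Rm}_{g_j}|_{g_j}$ over each intrinsic ball $B_{g_j}(p_j,\rho)$ with $\rho < \rho^\ast$. The only remaining input needed to invoke the Hamilton--Cao--Zhu compactness theorem is a uniform positive lower bound on the injectivity radius at the base point $p_j$.

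This injectivity radius bound is extrinsic in nature and is the main point requiring care. Fix some small $\rho_0 < \rho^\ast$ so that $|A_j|_{g_j} \leq \Lambda_0 := \Lambda_0(\rho_0)$ on $B_{g_j}(p_j, \rho_0)$ for all sufficiently large $j$. By the classical graph-representation argument of Langer (cf.~\cite{Lan85, Breu15}), on the intrinsic ball of radius $\min(\rho_0, c(n,N)/\Lambda_0)$ around $p_j$, the immersion $F_j$ can be written as a graph over the affine tangent space at $p_j$ and is in particular an embedding there; this forces the intrinsic injectivity radius at $p_j$ to be bounded below by a universal constant. The compactness theorem then produces, after passage to a subsequence, a complete pointed limit $(M_\infty, g_\infty, p_\infty)$ together with smooth relatively compact open sets $U_j$ exhausting the open geodesic ball $B_\infty := B_{g_\infty}(p_\infty, \rho^\ast)$ and diffeomorphisms $\phi_j : U_j \to \phi_j(U_j) \subset B_j$ with $\phi_j(p_\infty) = p_j$, fulfilling the first two bullets of the definition, and such that $\phi_j^\ast g_j$ converges to $g_\infty$ smoothly on every compact subset of $B_\infty$.

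It remains to pull the immersions through the $\phi_j$. Set $\tilde F_j := F_j \circ \phi_j : U_j \to \R^N$, viewed as an $\R^N$-valued function. One has $\tilde F_j(p_\infty) = 0$, and since $\phi_j^\ast g_j$ is uniformly comparable to $g_\infty$ on compacta, the image of any compact $K \subset B_\infty$ under $\tilde F_j$ lies in a fixed Euclidean ball, yielding a uniform $C^0$ bound. Higher derivatives are controlled via the identity $\nabla^{k+2}_{\phi_j^\ast g_j} \tilde F_j = \phi_j^\ast(\nabla^k_{g_j} A_j)$ (using $\nabla^{k+2} F = \nabla^k A$) together with the relation between the connections of $\phi_j^\ast g_j$ and $g_\infty$; the Christoffel corrections are themselves $C^\infty_{\mathrm{loc}}$-bounded by smooth convergence of the metrics, so $\{\tilde F_j\}$ is uniformly bounded in $C^\infty_{\mathrm{loc}}(B_\infty;\R^N)$ with respect to $g_\infty$. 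A diagonal Arzelà-Ascoli extraction produces the smooth limit $F_\infty : B_\infty \to \R^N$, and passing to the limit in $\tilde F_j^\ast g_{\mathrm{flat}} = \phi_j^\ast g_j$ shows $F_\infty^\ast g_{\mathrm{flat}} = g_\infty$, so $F_\infty$ is an isometric immersion; completeness of $(M_\infty, g_\infty)$ when $\rho^\ast = \infty$ is already part of Hamilton's conclusion. The main obstacle is thus the injectivity radius bound above: uniform bounds on $|\nabla^k A|$ alone do not preclude intrinsic collapsing at $p_j$, and it is precisely the availability of the ambient Euclidean space --- via the tangent-plane graph representation --- that distinguishes this setting from the purely intrinsic Cheeger-Gromov framework, where a noncollapsing hypothesis must be imposed separately.
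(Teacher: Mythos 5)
Your outline follows the paper's proof step for step: convert the hypotheses into bounds on $|\nabla^k\mathrm{Rm}(g_j)|$ via the Gauss equation, obtain a uniform injectivity radius lower bound at $p_j$, invoke the Hamilton--Cao--Zhu local intrinsic compactness theorem (Theorem 4.1.2 in \cite{CaZ06}), and then pull the immersions through the diffeomorphisms $\phi_j$ and run Arzel\`a--Ascoli using $\nabla^{k+2}F = \nabla^k A$ together with the smooth local convergence $\phi_j^\ast g_j \to g_\infty$.

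The one place that is genuinely glossed over is the injectivity radius bound. The assertion that the graph representation ``is in particular an embedding there; this forces the intrinsic injectivity radius at $p_j$ to be bounded below'' is too quick: injectivity of $F_j$ on a small intrinsic ball does not, by itself, exclude a short geodesic loop based at $p_j$, since such a loop is a curve from $p_j$ back to $p_j$ lying inside the ball, and its existence is compatible with $F_j$ being injective on the ball. What the graph representation actually supplies is quantitative: the slope bound on the graph function yields a volume lower bound $\mathrm{Vol}(B_{g_j}(p_j, 15r_0)) \geq c(n)r_0^n$, and one then combines this with the curvature bound (from Gauss) and the Cheeger--Gromov--Taylor local injectivity radius estimate (Theorem 4.2.2 in \cite{CaZ06}) to conclude $\mathrm{inj}(M_j, p_j) \geq \delta(n, \Lambda_0, \rho_0) > 0$. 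That is exactly the content of the paper's Lemma \ref{injectivity_radius}, and supplying it turns your outline into a complete argument. (Alternatively, one can bypass Cheeger--Gromov--Taylor by noting that in graph coordinates the metric is $C^1$-controlled, so geodesics emanating from $p_j$ stay close to straight lines for time comparable to $1/\Lambda_0$ and hence cannot close up in that window --- but either way the embedding property alone is not the operative mechanism.)
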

 
Before we discuss the proof of the proposition, let us show how to deduce a local convergence result for mean curvature flow as a corollary. We will take the following definition for local convergence of flows. 

\begin{definition}[cf. Definition 4.1.3 in \cite{CaZ06}]
Fix $\tau < 0$. Let $(M_j, g_j(t), p_j)$ be a sequence of evolving, pointed, complete $n$-dimensional Riemannian manifolds for $t \in (\tau, 0]$. Suppose $F_j(\cdot, t): (M_j, g_j(t), p_j) \to \mathbb{R}^N$, for $t \in (\tau, 0]$, is a sequence of smoothly evolving immersions satisfying $F_j(p_j, 0) = 0$. Consider a sequence of radii $\rho_j \in (0, \infty]$ such that $\rho_j \to \rho_\ast \in (0, \infty]$ and let $P_j:= B_{g_j(0)}(p_j, \rho_j) \times (-\tau, 0]$. We say the sequence of pointed evolving immersions $F_j|_{P_j}$ converges in $C^{\infty}_{\mathrm{loc}}$ to a pointed evolving immersion $F_{\infty}(\cdot, t) : (B_{\infty}, g_{\infty}(t), p_{\infty}) \to \mathbb{R}^N$ for $t \in (\tau, 0]$ of an evolving, pointed, $n$-dimensional Riemannian manifold $B_{\infty} = B_{g_{\infty}(0)}(p_{\infty}, \rho_\ast)$, if: 
\begin{enumerate}
\item[$\bullet$] We can find a sequence of (time-independent) smooth, relatively compact open sets $U_j$ in $B_{\infty}$ satisfying $p_{\infty} \in U_j$, $U_j \subset U_{j+1}$, and $\cup_{j} U_j = B_{\infty}$.
\item[$\bullet$] We can find a sequence of (time-independent) diffeomorphisms $\phi_j : U_j \to \phi_j(U_j) \subset B_{g_j}(p_j, \rho_j)$ satisfying $\phi_j(p_{\infty}) = p_j$. Moreover, for all $\rho \in (0, \rho_\ast)$, if $j$ is sufficiently large, then $B_{g_j}(p_j, \rho) \subset \phi_j(U_j)$.
\item[$\bullet$] The sequence of evolving immersions $F_j(\cdot, t) \circ \phi_j$ converges to $F_{\infty}(\cdot, t)$ smoothly with respect to $g_{\infty}(0)$ on every compact subset of $B_{\infty} \times (\tau, 0]$. 
\end{enumerate}
\end{definition}

To be specific, in the definition above, the sequence $F_j(\cdot, t) \circ \phi_j : U_j \to \mathbb{R}^N$ converges to $F_{\infty} : B_{\infty} \times (\tau, 0] \to \mathbb{R}^N$ smoothly with respect to $g_{\infty}(0)$ on compact subsets if for every nonnegative integer $k$, compact subset $K \subset B_{\infty}$, compact subset $[\tilde \tau, 0] \subset (\tau, 0]$, and positive real number $\varepsilon > 0$, there exists $j_0 := j_0(k, K, \tilde \tau, \varepsilon)$ such that $K \subset U_{j_0}$ and, for $j \geq j_0$, 
\[
\sup_{t \in [\tilde{\tau}, 0]} \sup_{K} \sum_{m =0}^{k} \big|\nabla_{g_{\infty}(0)}^m\big(F_j(\cdot, t) \circ \phi_j- F_{\infty}(\cdot, t)\big) \big|_{g_{\infty}(0)}^2 < \varepsilon^2. 
\]
 
Here is the compactness result for local solutions of mean curvature flow that we are after. This is the analogue of Theorem 4.1.5 in \cite{CaZ06}.

\begin{corollary}[Local compactness of mean curvature flow]\label{flow_compactness}
Fix $\tau < 0$. Let $(M_j, g_j(t), p_j)$ be a sequence of evolving, pointed, complete $n$-dimensional Riemannian manifolds for $t \in (\tau, 0]$. Suppose $F_j(\cdot, t): (M_j, g_j(t), p_j) \to \mathbb{R}^N$, for $t \in (\tau, 0]$, is a sequence of smoothly evolving immersions satisfying $F_j(p_j, 0) = 0$. Consider a sequence of radii $\rho_j \in (0, \infty]$ such that $\rho_j \to \rho_\ast \in (0, \infty]$. Suppose $F_j$ is a solution to mean curvature flow on the parabolic neighborhood $P_j:= B_{g_j(0)}(p_j, \rho_j) \times (\tau, 0]$.  Finally, suppose that for every radius $0 < \rho < \rho_\ast$ there exists a constant $\Lambda(\rho)$, independent of $j$, and a positive integer $j_0(\rho)$ such that for every $j \geq j_0(\rho)$, the second fundamental form $A_j$ of the evolving immersion $F_j(\cdot, t)$ satisfies the pointwise estimate 
\[
\sup_{B_{g_j(0)}(p_j, \rho) \times (\tau, 0]} |A_j|_{g_j} \leq \Lambda(\rho). 
\]
Then there exists a subsequence of solutions $F_j$ such that $F_j|_{P_j}$ converge in $C^{\infty}_{\mathrm{loc}}$ to a solution of mean curvature flow $F_{\infty} : B_{\infty} \times (\tau, 0] \to \mathbb{R}^N$ with $g_{\infty}(t) = F_{\infty}(\cdot, t)^\ast g_{\mathrm{flat}}$ and $B_{\infty} = B_{g_{\infty}(0)}(p_{\infty}, \rho_\ast)$. If $\rho_\ast = \infty$, the limiting solution is complete at time $t = 0$.
\end{corollary}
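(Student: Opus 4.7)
The plan is to reduce Corollary \ref{flow_compactness} to Proposition \ref{immersion_compactness} applied at the time slice $t = 0$, and then extend the convergence to the full parabolic domain by an Arzela-Ascoli argument. First I would upgrade the $C^0$ bound on $A_j$ to uniform $C^\infty$ bounds on all spatial covariant derivatives via the interior regularity theory for mean curvature flow (Ecker-Huisken type estimates, the MCF analogue of Shi's estimates for Ricci flow). These yield, for every $0 < \rho < \rho^\ast$, every $\tilde \tau \in (\tau, 0)$, and every integer $k \geq 0$, a constant $\Lambda_k(\rho, \tilde\tau)$, independent of $j$, with
\[
\sup_{B_{g_j(0)}(p_j, \rho/2) \times [\tilde \tau, 0]} |\nabla_{g_j(t)}^k A_j|_{g_j(t)} \leq \Lambda_k(\rho, \tilde\tau).
\]
Since $|\partial_t g_j| = 2|H_j h_j| \leq 2|A_j|^2 \leq 2\Lambda(\rho)^2$ on $B_{g_j(0)}(p_j, \rho) \times (\tau, 0]$, the metrics $g_j(t)$ at different times are uniformly bi-Lipschitz comparable on this region, and intrinsic balls at different times are uniformly comparable. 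In particular, the bounds at $t=0$ meet the hypotheses of Proposition \ref{immersion_compactness}.

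Next I would apply Proposition \ref{immersion_compactness} to the initial time slices $F_j(\cdot, 0) : (M_j, g_j(0), p_j) \to (\R^N, g_{\mathrm{flat}}, 0)$. After passing to a subsequence, this yields the open geodesic ball $(B_\infty, g_\infty(0), p_\infty)$, a smooth exhaustion $\{U_j\}$ of $B_\infty$, and time-independent diffeomorphisms $\phi_j : U_j \to \phi_j(U_j) \subset B_{g_j(0)}(p_j, \rho_j)$ with $\phi_j(p_\infty) = p_j$ such that $F_j(\cdot, 0) \circ \phi_j \to F_\infty(\cdot, 0)$ smoothly with respect to $g_\infty(0)$ on every compact subset of $B_\infty$.

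With these $\phi_j$ fixed, I would consider the pulled-back space-time immersions $\tilde F_j(p, t) := F_j(\phi_j(p), t)$ defined on $U_j \times (\tau, 0]$. Combining the evolution equation $\partial_t F = \vec H$ with the $j$-independent spatial $C^\infty$ bounds on $A_j$ and the metric comparability from Step 1, a parabolic bootstrap expresses every mixed derivative $\partial_t^\ell \nabla_{g_\infty(0)}^m \tilde F_j$ as a polynomial in $F$, $A_j$, and its derivatives, and thus yields uniform bounds for these on compact subsets of $B_\infty \times [\tilde \tau, 0]$ for each $\tilde \tau \in (\tau, 0)$. A diagonal Arzela-Ascoli argument then produces a further subsequence converging smoothly on every compact subset of $B_\infty \times (\tau, 0]$ to a limit $F_\infty : B_\infty \times (\tau, 0] \to \R^N$ whose restriction to $t=0$ is the immersion $F_\infty(\cdot, 0)$ from the previous step. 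The limit satisfies $\partial_t F_\infty = \vec H_\infty$ by passing this first-order equation to the smooth limit, and $g_\infty(0)$ is complete when $\rho^\ast = \infty$ by the last clause of Proposition \ref{immersion_compactness}.

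The main obstacle is the first step: verifying that interior regularity delivers $j$-independent bounds on $|\nabla_{g_j(t)}^k A_j|$ up to $t=0$ on a slightly shrunken spatial ball. This relies on parabolic interior estimates for the system evolving $\nabla^k A$ under mean curvature flow, together with the uniform metric comparability from the a priori $|A_j|$ bound, which guarantees that time-zero intrinsic balls are comparable to intrinsic balls at earlier times. Once this space-time $C^\infty$ control is in place, the remainder is a direct parabolic adaptation of the Cheeger-Gromov compactness already packaged in Proposition \ref{immersion_compactness}, together with the observation that the diffeomorphisms produced there serve uniformly for all times in $(\tau, 0]$.
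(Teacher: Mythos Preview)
Your proposal is correct and follows essentially the same route as the paper: apply Proposition \ref{immersion_compactness} at $t=0$ after interior estimates, then extend the convergence backward in time via uniform bounds on $\nabla_{g_\infty(0)}^k \tilde F_j$ and Arzela--Ascoli. The paper packages your ``parabolic bootstrap'' step as a separate technical lemma (Lemma \ref{time_extension}), which carefully tracks the difference $\tilde\Gamma_j - \Gamma$ of connections to convert bounds on $|\nabla_{\tilde g_j}^k \tilde A_j|_{\tilde g_j}$ into bounds on $|\nabla_{g_\infty(0)}^k \tilde F_j|_{g_\infty(0)}$; this is the step you sketch informally, and it is worth noting that mere $C^0$ metric comparability is not enough---one needs inductive control on $|\nabla_{g_\infty(0)}^k \tilde g_j|$ as well, which the lemma provides.
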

Note that if $\rho_\ast = \infty$ and if the bounds for the second fundamental form in the proposition above can be taken to be independent of $\rho$, then the solution will be complete on every time-slice. 
 
It is straightforward to see that the assumptions of the corollary together with Proposition \ref{immersion_compactness} allow us to extract a limiting immersion at the time $t = 0$. In order to extend the convergence backwards in time, we will use the following lemma, which is an adaptation of Lemma 4.1.4 in \cite{CaZ06} to our setting.  

\begin{lemma}[cf. Lemma 4.1.4 in \cite{CaZ06}]\label{time_extension}
Let $(B, g, p)$ be a pointed Riemannian manifold, $K \subset B$ be a compact subset, and $\tilde F_j(\cdot, t)$ be a sequence of pointed (i.e, $\tilde F_j(p, 0) = 0$) solutions of mean curvature flow defined on $K \times [\tilde \tau, 0]$. Let $\tilde g_j(t) = \tilde F_j(\cdot, t)^\ast g_{\mathrm{flat}}$. Let $\nabla$ denote covariant derivative of $g$ and $\tilde \nabla = \tilde \nabla_{\tilde g_j}$ denote the covariant derivative of $\tilde g_j(t)$ for each $j$. Suppose there exist constants $C_k$ (independent of $j$) for each integer $k \geq 0$ such that  
\begin{enumerate}
\item[(a)] $C_0^{-1} g \leq \tilde g_j(0) \leq C_0 g$ on $K$ for all $j$;
\item[(b)] for each $k \geq 0$, $|\nabla^k\tilde F_j(\cdot, 0) |_{g} \leq C_k$ on $K$ for all $j$; 
\item[(c)] $|\nabla_{\tilde g_j}^k \tilde A_j |_{\tilde g_j} \leq C_k$ on $K \times [\tilde \tau, 0]$ for all $j$. 
\end{enumerate}
Then there exists constants $\tilde C_k$ (independent of $j$) for each integer $k \geq 0$ such that 
\[
\tilde C_0^{-1} g \leq \tilde g_j(t) \leq \tilde C_0 g; \qquad |\nabla^k \tilde F_j  |_{g} \leq \tilde C_k \;\; (k\geq 0)
\]
 on $K \times [\tilde \tau, 0]$ for all $j$. 
\end{lemma}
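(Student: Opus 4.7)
The proof separates into metric equivalence, a $C^0$ bound on $F_j$, and a simultaneous induction on the higher spatial derivatives of $\tilde g_j$ and $F_j$.

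For the metric equivalence, I would differentiate $\tilde g_{j,ab} = \langle\partial_a F_j, \partial_b F_j\rangle$ in $t$ using $\partial_t F_j = \vec H_j$ to obtain $\partial_t\tilde g_{j,ab} = -2\langle A_{j,ab},\vec H_j\rangle$. Hypothesis (c) at $k=0$ bounds $|A_j|_{\tilde g_j}$, so $|\partial_t\tilde g_j|_{\tilde g_j} \leq C$. Integrating from $t = 0$ and combining with (a) yields $\tilde C_0^{-1} g \leq \tilde g_j(t) \leq \tilde C_0 g$ on $K \times [\tilde\tau, 0]$. Integrating $\partial_t F_j = \vec H_j$ together with the uniform bound on $|\vec H_j|$ and the initial bound from (b) at $k=0$ then yields the $C^0$ bound on $F_j$.

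For the higher derivatives, the plan is to prove by induction on $k \geq 0$ the combined claim $(P_k)$: $|\nabla^m\tilde g_j|_g \leq \tilde C_m^g$ and $|\nabla^m F_j|_g \leq \tilde C_m$ on $K \times [\tilde\tau, 0]$ for all $m \leq k$, with constants independent of $j$. Assuming $(P_k)$, the evolution $\partial_t\tilde g_j = -2\tilde h_j$ with $\tilde h_{j,ab} = \tilde g_j^{cd}\langle A_{j,ab}, A_{j,cd}\rangle$ and the time-independence of $g$ give $\partial_t\nabla^{k+1}\tilde g_j = -2\nabla^{k+1}\tilde h_j$. Using the Leibniz rule and the pointwise identity
\[
\tilde\nabla^\ell W = \nabla^\ell W + \sum_{i=0}^{\ell - 1}\nabla^i W \ast P_{\ell, i}\big(T_j, \nabla T_j, \ldots, \nabla^{\ell - i - 1} T_j\big),
\]
where $T_j := \tilde\nabla - \nabla$ is algebraic in $\tilde g_j^{-1}$ and $\nabla\tilde g_j$, together with the bounds $|\tilde\nabla^m A_j|_{\tilde g_j} \leq C_m$ from (c) and the metric equivalence, one establishes the pointwise estimate
\[
|\nabla^{k+1}\tilde h_j|_g \leq C + C'\,|\nabla^{k+1}\tilde g_j|_g
\]
on $K \times [\tilde\tau, 0]$, with $C, C'$ depending only on $(P_k)$, (a), (b), and (c). Since $|\nabla^{k+1}\tilde g_j(\cdot, 0)|_g$ is a polynomial in $|\nabla^\ell F_j(\cdot, 0)|_g$ for $\ell \leq k+2$ and hence bounded by (b), Gronwall's inequality applied pointwise in $x \in K$ yields $|\nabla^{k+1}\tilde g_j|_g \leq \tilde C_{k+1}^g$. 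The companion bound $|\nabla^{k+1} F_j|_g \leq \tilde C_{k+1}$ is then obtained by integrating $\partial_t\nabla^{k+1} F_j = \nabla^{k+1}\vec H_j$ from $t = 0$, since every factor in $\nabla^{k+1}\vec H_j = \nabla^{k+1}(\tilde g_j^{cd}A_{j, cd})$ is now controlled by $(P_k)$, the just-established bound on $\nabla^{k+1}\tilde g_j$, (c), and (b).

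The main technical hurdle is verifying the linear dependence of $\nabla^{k+1}\tilde h_j$ on $\nabla^{k+1}\tilde g_j$. The identity above, applied at order $\ell = k+1$, shows that the highest-order derivative of $T_j$ appearing is $\nabla^k T_j$, which—since $T_j$ involves one derivative of $\tilde g_j$—corresponds precisely to $\nabla^{k+1}\tilde g_j$ entering linearly with coefficients expressible in $\tilde g_j^{-1}$, $A_j$, and lower-order data; all remaining terms involve only $\nabla^{\leq k}\tilde g_j$ and $\tilde\nabla^{\leq k+1} A_j$ and are controlled by $(P_k)$ and (c). With this linear structure in hand, the Gronwall estimate closes the induction.
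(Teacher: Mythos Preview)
Your proposal is correct and follows essentially the same strategy as the paper: establish metric equivalence by integrating the evolution $\partial_t\tilde g_j = -2\langle A_j,\vec H_j\rangle$, then run an induction on $k$ that converts between $\nabla$ and $\tilde\nabla$ via the connection difference $T_j = \tilde\Gamma_j - \Gamma$, obtaining a Gronwall-type inequality $|\partial_t\nabla^k\tilde g_j|_g \leq C + C|\nabla^k\tilde g_j|_g$ and then integrating $\partial_t\nabla^k\tilde F_j = \nabla^k\vec H_j$. The only organizational difference is that the paper isolates the bound on $|\tilde\Gamma_j - \Gamma|_g$ as a separate preliminary step (integrating $\partial_t\tilde\Gamma_j = \tilde g_j^{-1}\ast\tilde\nabla(\partial_t\tilde g_j)$ directly, which is bounded purely by (c)) before entering the induction, whereas you fold this into the base of your simultaneous induction $(P_k)$; both arrangements yield the same estimates.
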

\begin{proof}
The proof of this lemma differs very little from the proof of Lemma 4.1.4 in \cite{CaZ06}. This lemma also follows from the results in Appendix A of Brendle's book \cite{Bre10}. So we will just highlight the key points. Let $p \in K$, $v \in T_pB$, and define $\mu(t) = \tilde g_j(t)(v, v)$. Since   
\[
\Big|\frac{\partial}{\partial t} \tilde g_j(v, v)\Big|= 2 \Big|\langle \tilde A_j(v, v), \tilde {\vec{H}}_j \rangle\Big| \leq C(n)C_0^2 \tilde g_j(v, v), 
\]
(recall $\langle \cdot\,, \cdot \rangle = g_{\mathrm{flat}}$) we have $|\mu'(t)| \leq C\mu(t)$. From this and (a), the uniform equivalence of the metrics $\tilde C_0^{-1} g \leq \tilde g_j(t) \leq \tilde C_0 g$ readily follows. Let $\tilde \Gamma_j$ and $\Gamma$ denote the connection coefficients of $\tilde g_j$ and $g$ respectively. The expression $\frac{\partial}{\partial t} (\tilde \Gamma_j - \Gamma) = \frac{\partial}{\partial t} \tilde \Gamma_j$ is tensorial and, since $\frac{\partial}{\partial t} \tilde \Gamma_j = \tilde g_j^{-1} \ast \tilde \nabla_{\tilde g_j} (\frac{\partial}{\partial t} \tilde g_j)$, assumption (c) implies an estimate $\big|\frac{\partial}{\partial t} (\tilde \Gamma_j - \Gamma) \big|_g \leq C$ on $K \times [\tilde \tau, 0]$. On the other hand, assumption (b) implies (for each $k$) that $|\nabla^k \tilde g_j(0)| \leq C$ on $K$, which implies $|\tilde \Gamma_j(0) - \Gamma|_g \leq C$. By integration one concludes $|\tilde \Gamma_j - \Gamma|_g \leq C$ on $K \times [\tilde \tau, 0]$. Now that we have estimates for the difference of the connection coefficients, we estimate 
\begin{align*} 
\Big|\frac{\partial}{\partial t} \nabla^k \tilde g_j\Big|_g =  \Big| \nabla^k \frac{\partial}{\partial t} \tilde g_j\Big|_g  \leq \Big|\tilde \nabla^k \frac{\partial}{\partial t} \tilde g_j\Big|_g  + \Big| (\nabla^k - \tilde \nabla^k) \frac{\partial}{\partial t}  \tilde g_j \Big|_g. 
\end{align*}
In light of the evolution equation for $\tilde g_j$ and assumption (c), we can bound $|\tilde \nabla^k \frac{\partial}{\partial t} \tilde g_j|_g \leq C$. By induction, we can bound the second term $|(\nabla^k - \tilde \nabla^k) \frac{\partial}{\partial t}  \tilde g_j |_g$ by $C + C|\nabla^k \tilde g_j|_g$. Hence, we have $\frac{\partial}{\partial t}  |\nabla^k \tilde g_j|_g \leq C + C|\nabla^k \tilde g_j|_g$ By integration and assumption (b), we obtain the estimate $|\nabla^k \tilde g_j|_g \leq C$ on $K \times [\tilde \tau, 0]$ for each $k$. See Lemma A.3 in \cite{Bre10} and the proof in \cite{CaZ06} for further details.  In a similar fashion, we have 
\begin{align*}
\Big|\frac{\partial}{\partial t}(\nabla^k \tilde F_j) \Big|_g &= \Big|\nabla^k \frac{\partial}{\partial t} \tilde F_j \Big|_g = \Big|\nabla^k \tilde{\vec H}_j \Big|_g  \leq \Big|\tilde \nabla^k  \tilde{\vec H}_j  \Big|_g + \Big|(\nabla^k - \tilde \nabla^k) \tilde{\vec H}_j  \Big|_g. 
\end{align*}
By assumption (c) the first term is bounded by a constant. Our estimates for the metric and its derivatives together with assumption (c) give control of the second term. For example, because $\tilde \nabla \tilde {\vec H}_j = \nabla \tilde {\vec H}_j$, we have
\begin{align*}
(\nabla^2 - \tilde \nabla^2)  \tilde{\vec H}_j & =(\nabla - \tilde \nabla)( \nabla \tilde{\vec H}_j) + \tilde \nabla (\nabla - \tilde \nabla) \tilde{\vec H}_j \\
& = (\nabla - \tilde \nabla) (\tilde \nabla \tilde{\vec H}_j)\\
& = (\Gamma - \tilde \Gamma_j) \ast \tilde \nabla \tilde{\vec H}_j
\end{align*}
Note that $\Gamma - \tilde \Gamma_j = \tilde g_j^{-1} \ast \nabla \tilde g_j$. For the general case, see Lemma A.4 in \cite{Bre10}. By integration and assumption (b), we obtain the desired estimates. 
\end{proof}

Now we can give a proof of Corollary \ref{flow_compactness}. 

\noindent \textbf{Proof of Corollary \ref{flow_compactness}.}
We have uniform estimates for $A_j$ on the parabolic neighborhood $B_{g_j(0)}(p_j, \rho) \times (\tau, 0]$ for each $0 < \rho < \rho_\ast$ (assuming $j \geq j_0(\rho)$). Therefore, by standard interior estimates for mean curvature flow, for each integer $k \geq 0$ there exists a constant $C_k(\rho)$ such that 
\[
\sup_{B_{g_j(0)}(p_j, \rho)} |\nabla_{g_j(0)}^k A_j(0)|_{g_j(0)} \leq C_k(\rho)
\]
for each $0 < \rho < \rho_\ast$ (assuming $j \geq j_0(\rho)$). In particular, the sequence $F_j(\cdot, 0) : (M_j, g_j(0), p_j) \to (\mathbb{R}^{N}, g_{\mathrm{flat}}, 0)$ satisfies the hypotheses of Proposition \ref{immersion_compactness}. Thus after passing to a subsequence, which we still denote by $F_j$, we can find a pointed Riemannian manifold $(B_\infty, g_{\infty}, p_{\infty})$, a sequence of domains $U_j$ exhausting $B_{\infty}$, and injective smooth maps $\phi_j : U_j \to B_{g_j(0)}(p_j, \rho_j)$ such that $\tilde F_j  := F_j \circ \phi_j$ converges smoothly on compact subsets of $B_{\infty}$ with respect to $g_{\infty}$ to a pointed isometric immersion $F_{\infty} : (B_{\infty}, g_{\infty}, p_{\infty}) \to (\mathbb{R}^N, g_{\mathrm{flat}}, 0)$. Here $B_{\infty}$ is the open geodesic ball $B_{g_{\infty}}(p_{\infty}, \rho_\ast)$.

Now we can apply Lemma \ref{time_extension}. Let $K \subset B_{\infty}$ be any compact subset and $[\tilde \tau, 0] \subset (\tau, 0]$. Let $\tilde g_j(t) := \tilde F_j(\cdot, t)^\ast g_{\mathrm{flat}}$. To simplify notation, let $F := F_{\infty}$ and $g := g_{\infty}$. Let $\tilde \nabla_{\tilde g_j}$ and $\nabla$ denote the covariant derivatives of $\tilde g_j$ and $g$ respectively. By the definition of convergence, after passing to a suitable diagonal subsequence, for every integer $k \geq 0$, there exists a constant $C_k$ such that 
\[
|\nabla^k \tilde F_j(\cdot, 0)|_g \leq  \big|\nabla^k\big(\tilde F_j(\cdot, 0) - F\big) \big|_g  + |\nabla^k F \big|_g \leq C_k
\]
on $K$ for all $j$. Clearly, we also have $C_0^{-1} g \leq \tilde g_j(0) \leq C_0 g$ on $K$ for all $j$. Finally, by diffeomorphism invariance and interior estimates we obtain $|\tilde\nabla_{\tilde g_j}^k \tilde A_j |_{\tilde g_j} = |\nabla_{g_j}^k A_j|_{g_j} \leq C_k$ on $K \times [\tilde \tau, 0]$ for all $j$. Thus, assumptions (a) - (c) of Lemma \ref{time_extension} are satisfied and consequently, we have uniform estimates $\tilde C_0^{-1} g \leq \tilde g_j(t) \leq \tilde C_0 g$ and $|\nabla^k \tilde F_j  |_{g} \leq \tilde C_k$ for $k\geq 0$ with respect to the fixed background metric $g$. We can now use Arzela-Ascoli with a standard diagonalization argument to extract a subsequence that converges uniformly on compact subsets of $B_{\infty} \times (\tau, 0]$. In the limit, we obtain a family of smooth maps $F_{\infty}(\cdot, t) : B_{\infty} \to \mathbb{R}^N$ for $t \in (\tau, 0]$. Evidently, $F_{\infty}(\cdot, 0) = F = F_{\infty}$ since we already have convergence at time $t = 0$. It remains to verify the family $F_{\infty}(\cdot, t)$ is a solution of mean curvature flow. The (0,2)-tensor $g_{\infty}(t) = F_{\infty}(\cdot , t)^\ast g_\mathrm{flat}$ is the limit of the nondegenerate metrics $\tilde g_j(t)$ each of which are uniformly equivalent to the Riemannian metric $g = g_{\infty}(0)$. In particular, $g_{\infty}(t)$ is itself a Riemannian metric and $F_{\infty}(\cdot, t)$ is a family of immersions. Finally, as a limit of solutions of mean curvature flow, it is clear that $F_{\infty}(\cdot, t)$ satisfies the same equation. \qed

In the remainder of this appendix, we will give a proof of Proposition \ref{immersion_compactness}. Our approach will be first to extract an intrinsic limit along a suitable subsequence, using the work of Hamilton and Cao-Zhu's localization of it. To do so, we will need an injectivity radius estimate, which we obtain from the following lemma.

\begin{lemma}\label{injectivity_radius}
Let $F : (M, g, p) \to (\mathbb{R}^N, g_{\mathrm{flat}}, 0)$ be a pointed isometric immersion of a smooth connected complete Riemannian manifold. Suppose that for some $\rho \in (0, \infty)$, 
\[
\sup_{B_g(p, \rho)} |A| \leq \Lambda.
\]
There exists a positive constant $\delta := \delta(n, \Lambda, \rho) > 0$ such that $\mathrm{inj}(M, p) \geq \delta$.
\end{lemma}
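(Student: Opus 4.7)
The plan is to combine Klingenberg's lower bound for the injectivity radius at a point, the Gauss equation, and Fenchel's theorem for closed curves in Euclidean space. Since $F$ is an isometric immersion into $\R^N$, the Gauss equation gives $|\mathrm{sec}_g| \le \Lambda^2$ on $B_g(p, \rho)$ from the hypothesis $|A| \le \Lambda$. By Klingenberg's lemma, it then suffices to bound from below both the conjugate radius at $p$ and the length $L$ of the shortest geodesic loop based at $p$ (smooth on the interior, possibly with a corner at $p$), since $\mathrm{inj}(M, p) \ge \min\{\mathrm{conj}(M, p), L/2\}$.

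For the conjugate radius, Rauch's comparison theorem applied along any geodesic from $p$ gives $\mathrm{conj}(M, p) \ge \min\{\rho, \pi/\Lambda\}$: if the first conjugate point along some unit-speed geodesic emanating from $p$ lies inside $B_g(p, \rho)$, then the curvature bound there forces its distance from $p$ to be at least $\pi/\Lambda$; otherwise the first conjugate point along that geodesic is at distance $\ge \rho$ automatically. For the geodesic loop, we may assume $L \le 2\rho$ (otherwise $L/2 > \rho$ and this factor is not limiting), so the entire loop $\gamma:[0,L] \to M$ lies in $\overline{B}_g(p, L/2) \subset B_g(p, \rho)$. Because $\gamma$ is a geodesic in the interior, the ambient acceleration of the closed curve $F \circ \gamma$ in $\R^N$ is precisely $A(\gamma', \gamma')$, so its Euclidean curvature satisfies $|\kappa| \le |A|_g \le \Lambda$. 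The curve $F\circ \gamma$ is piecewise smooth, closed, and regular with at most one corner (at $F(p)$, of interior angle $\theta \in [0,\pi]$), so Fenchel's theorem for piecewise-smooth closed curves yields $\int_0^L |\kappa|\, ds + (\pi - \theta) \ge 2\pi$, and therefore $\Lambda L \ge \pi + \theta \ge \pi$, i.e. $L \ge \pi/\Lambda$.

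Combining the two estimates gives $\mathrm{inj}(M, p) \ge \min\{\rho, \pi/(2\Lambda)\}$, which can serve as the constant $\delta(n, \Lambda, \rho)$ (indeed it depends only on $\Lambda$ and $\rho$, not on $n$ or $N$). The one step requiring care is the version of Klingenberg's lemma invoked: when the cut point at distance $\mathrm{inj}(M, p)$ is not conjugate to $p$, one must use the classical refinement that the two minimizing geodesics from $p$ to the cut point meet with opposite tangents, so that they concatenate into a genuine geodesic loop with only a single corner at $p$. Without this, a naive broken-path construction would produce two corners and Fenchel's theorem would degenerate to the trivial bound; with it, everything else in the argument is standard and the only bookkeeping is to confirm that the relevant geodesic segments remain inside $B_g(p, \rho)$, as done above.
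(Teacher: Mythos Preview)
Your argument is correct and takes a genuinely different route from the paper. The paper proceeds by first showing that $F$ is graphical over a ball of definite radius in $dF_p(T_pM)$, which yields two-sided Euclidean volume bounds $c(n)r_0^n \le \mathrm{Vol}(B_g(p,15r_0)) \le C(n)r_0^n$ for $r_0 \sim \min\{\Lambda^{-1},\rho\}$; it then feeds these, together with the sectional curvature bound from the Gauss equation, into the Cheeger--Gromov--Taylor local injectivity radius estimate. Your approach instead combines Klingenberg's lemma with Rauch comparison (for the conjugate radius) and Fenchel's theorem applied to the image curve $F\circ\gamma$ (for the geodesic loop length). This is more elementary, avoids the volume-comparison machinery, and yields the explicit dimension-free bound $\mathrm{inj}(M,p)\ge\min\{\rho,\pi/(2\Lambda)\}$, which is sharper than what the paper obtains. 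The paper's route, on the other hand, establishes the graphical representation and volume bounds as a byproduct, which are themselves standard ingredients in immersion-compactness arguments and closer in spirit to the Cheeger--Gromov framework used elsewhere in the appendix. Your care with the refined Klingenberg lemma (opposite tangents at the cut point, hence a single corner at $p$) is exactly what is needed to make the Fenchel bound nontrivial; note also that Fenchel's inequality holds for closed \emph{immersed} curves in $\R^N$ via the tangent indicatrix argument, so the fact that $F$ is only an immersion causes no trouble.
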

\begin{proof}
After composing $F$ with an isometry of $\mathbb{R}^N$, we may assume that $F(0) = 0$ and $dF_p(T_pM) = \mathbb{R}^n \times \{0\} \subset \mathbb{R}^N$. Let $\pi : \mathbb{R}^N \to \mathbb{R}^n$ denote the projection onto the first $n$-coordinates of $\mathbb{R}^N$. Let 
\[
r_0 := \frac{1}{500} \min\{\Lambda^{-1}, \rho \}.
\] 
We will first show that the immersion can be expressed as a graph over a ball in $dF_p(T_pM)$ of radius proportional to $r_0$. Then, in the graphical parametrization we can estimate the intrinsic volume, and from this the local injectivity radius estimate will follow. 

\underline{Step 1:} First, we show that $\pi \circ F$ is injective on $B_g(p, 15 r_0)$.  If not, then there exist distinct points $q_0, q_1 \in B_g(p, 15 r_0)$ such that $\pi \circ F(q_0) = \pi \circ F(q_1)$. Let $\gamma : [0, 1] \to M$ be a minimizing geodesic with $\gamma(0) = q_0$ and $\gamma(1) = q_1$. Now $d_{g}(q_0, q_1) < 30 r_0$, which implies 
\[
d_g(p, \gamma(t)) \leq d_g(p, q_0) + d_g(q_0, \gamma(t)) < 45r_0 < \rho.
\]
So $\gamma([0,1]) \subset B_g(p,\rho)$ and therefore $|A|(\gamma(t)) \leq \Lambda$ for each $t \in [0,1]$. Let $\tilde \gamma(t) = F \circ \gamma(t)$ and $\hat \gamma(t) = \pi \circ \tilde \gamma(t)$. Because $\gamma(t)$ is a geodesic, we have $\tilde \gamma'' = A(\gamma', \gamma')$ and $|\tilde \gamma'| = |\gamma'| = d_g(q_0, q_1)$. We claim that $|\hat \gamma'(t)|^2 \geq \frac{1}{10} d_g(q_0, q_1)^2$ for all $t \in [0, 1]$. To see this fix some $t \in [0, 1]$ and let $X(s)$ be the parallel transport of $\gamma'(t)$ along a minimizing geodesic $\sigma(s)$ connecting $\sigma(0) = p$ to $\sigma(1) = \gamma(t)$. Note $|\sigma'| \leq 45r_0$. Let $\tilde X(s) = dF_{\sigma(s)}(X(s))$ and note that $|\pi(\tilde X(0))| = |\tilde X(0)| = |X(0)| = d_g(q_0, q_1)$. Now 
\[
\Big|\frac{d}{ds} \frac{1}{2}|\pi(\tilde X)|^2 \Big| = \big|\langle\pi( A(\sigma', X)), \pi(\tilde X) \rangle\big| \leq |A||\sigma'||X|^2 \leq 45 \Lambda r_0 d_g(q_0, q_1)^2  \leq \frac{9}{20} d_{g}(q_0, q_1)^2. 
\]
Therefore, 
\[
\frac{1}{2} |\hat \gamma'(t)|^2 = \frac{1}{2} |\pi(\tilde X(1))|^2 \geq \frac{1}{2} d_g(q_0, q_1)^2 - \frac{9}{20} d_g(q_0, q_1)^2 \geq \frac{1}{20} d_g(q_0, q_1)^2, 
\]
which implies $|\hat \gamma'(t)|^2 \geq \frac{1}{10} d_{g}(q_0, q_1)^2$. Now we consider the function $f(t) = \frac{1}{2}|\hat \gamma(t) - \hat \gamma(0)|^2$. Then $f' = \langle \hat \gamma', \hat \gamma - \hat \gamma(0) \rangle$ and $f'' = \langle \hat \gamma'', \hat \gamma - \hat \gamma(0) \rangle + |\hat \gamma'|^2$. Also, $f(0) = f(1) = 0$, $f'(0) = 0$ and $f''(0) = |\hat\gamma'(0)|^2 \geq \frac{1}{10} d_g(q_0, q_1) > 0$. Consequently, $f$ attains its maximum at some point $t_0 \in (0, 1)$. Since $|\hat \gamma''| = |\pi( A(\gamma', \gamma'))| \leq \Lambda d_g(q_0, q_1)^2$ and $|\hat \gamma - \hat \gamma(0)| \leq 30 r_0$, at the point $t_0$, we obtain the inequality 
\[
0 \geq f''(t_0) \geq - 30r_0 \Lambda d_g(q_0, q_1)^2 + \frac{1}{10} d_g(q_0, q_1)^2. 
\]
But this implies that $r_0 \geq \frac{1}{300} \Lambda^{-1}$, in contradiction with its definition. 

\underline{Step 2:} Next, we show that $c(n) r_0^n \leq \mathrm{Vol}(B_g(p, 15r_0))\leq C(n) r_0^n$. Let $\Omega := \pi(F(B_g(p, 15r_0))) \subset \mathbb{R}^n$. Let $|| \cdot ||$ denote the Euclidean norm on $\mathbb{R}^n$ and $D$ its standard derivative. Now since $\pi$ is injective on $F(B_g(p, 15 r_0))$, we can find a smooth function $f : \Omega \to \mathbb{R}^{N-n}$ such that $f(0) = 0$ , $Df(0) = 0$, and $\mathrm{graph}(f) = F(B_g(p, 15r_0))$. For graphical parametrizations, it is straightforward to show (see Lemma 2.2 in \cite{Breu15}) the inequality
\[
||D^2 f|| \leq (1 + ||Df||^2)^{\frac{3}{2}} (|A|_g \circ F^{-1})
\]
holds on $\Omega$. 
Let $\tilde r > 0$ be the maximal radius such that $B^n_{\tilde r} \subset \Omega$, where $B^n_{\tilde r}$ denote the Euclidean $n$-ball of radius $r$ centered at the origin.  Clearly, $\tilde r \leq 15 r_0$. For $x \in B^n_{\tilde r} \setminus \{0\}$, write $x = r \omega$ where $\omega$ is a unit vector and $r \in (0, \tilde r)$. Consider the function $\mu(t) = ||Df||^2(t \omega)$ for $t \in [0, r]$. Noting that $\mathrm{graph}(f)\subset F(B_g(p, \rho))$, the inequality above gives
\[
\mu'(t) \leq 2||D^2f||(t\omega)\, \mu(t)^\frac{1}{2} \leq 2(1 + \mu(t))^{\frac{3}{2}} \mu(t)^{\frac{1}{2}} \Lambda. 
\]
We can rewrite this as 
\[
\frac{d}{dt} \Big(\frac{\mu(t)}{1 + \mu(t)}\Big)^{\frac{1}{2}} \leq \Lambda
\]
Since $\mu(0) = 0$, integrating from $0$ to $r$ gives 
\[
\Big(\frac{||Df||^2(x)}{1 + ||Df||^2(x)}\Big)^\frac{1}{2}  \leq r \Lambda \leq 15\, r_0 \Lambda < \frac{1}{20}. 
\]
Therefore, we conclude that $||Df|| \leq \frac{1}{100}$ on $B^n_{\tilde r}$. Finally, using this slope bound, we can derive the volume estimate. Consider a direction $\omega$ such that $\tilde r \omega \in \partial \Omega$. Using our estimate for the slope of $f$, the path $\tilde \sigma(t) = (t \omega, f(t\omega))$ clearly has length bounded by $\frac{3}{2}\tilde r$. Since the path $\sigma(t) := F^{-1} \circ \tilde \sigma(t)$ is a path in $M$ from $p$ to the boundary of $B_g(p, 15r_0)$ and $F$ preserves lengths, we conclude $\tilde r \geq 10r_0$. Since $B_{\tilde r}^n \subset \Omega$, using the graphical parametrization, we obtain $\mathrm{Vol}(B_g(p, 15r_0)) \geq c(n) r_0^n$. On the other hand, the slope bound for $f$ gives the reverse inequality $\mathrm{Vol}(B_g(p, 15r_0))\leq C(n) r_0^n$.

\underline{Conclusion:} Using the Gauss equation, we can bound the absolute value of the sectional curvature in $B_g(p, \rho)$ by $2\Lambda$. Using the volume estimates in Step 2, it follows from Theorem 4.2.2. in \cite{CaZ06} (which is a local injectivity radius estimate due to Cheeger-Gromov-Taylor in \cite{CGT82}) that $\mathrm{inj}(M, p) \geq c(n, \Lambda, \rho) r_0$. 
\end{proof}

We can now prove Proposition \ref{immersion_compactness}. 

\noindent \textbf{Proof of Proposition \ref{immersion_compactness}.} We will complete the proof in two steps.  

\underline{Step 1:} We will first take an intrinsic limit in the sense of Definition 4.1.1 in \cite{CaZ06} by applying Theorem 4.1.2. in \cite{CaZ06}. To that end, we consider the sequence of geodesic balls $B_j = B_{g_j}(p_j, \rho_j) \subset M_j$ and verify two conditions. 
\begin{enumerate}
\item[(a)] Consider a radius $\rho < \rho_\ast$ and an integer $k \geq 0$. Via the Gauss equation, estimates for covariant derivatives of the second fundamental form yield corresponding estimates for the Riemannian curvature tensor $\mathrm{Rm}(g_j)$ of the metric $g_j$. In particular, $|\nabla^k \mathrm{Rm}(g_j)|$ can be bounded pointwise by an expression in $|\nabla^l A_j|$ for $0 \leq l \leq k$. Thus we can find a constant $\tilde \Lambda_k(\rho)$, independent of $j$ and a positive integer $j_1(k, \rho)$ such that $j \geq j_1(k, \rho)$
\[
\sup_{B_{g_j}(p_j, \rho)} |\nabla^k \mathrm{Rm}(g_j)| \leq  \tilde \Lambda_k(\rho). 
\]
\item[(b)] Let $\tilde \rho = \min\{\frac{1}{2} \rho_\ast, 1\}$. After passing to a subsequence, we have $|A_j| \leq \Lambda_0(\tilde \rho)$ on the geodesic ball $B_{g_j}(p_j, \tilde \rho)$ for all $j$. Thus, by Lemma \ref{injectivity_radius}, there exists a positive constant $\delta:= \delta(n, \Lambda_0(\tilde \rho), \tilde \rho)$, independent of $j$, such that the injectivity radius of $M_j$ at $p_j$ in the metric $g_j$ satisfies 
\[
\mathrm{inj}(M_j, p_j) \geq \delta.
\]
\end{enumerate}
Having verified conditions (a) and (b), we may now apply Theorem 4.1.2. in \cite{CaZ06} to obtain the following conclusion: there exists a subsequence of pointed geodesic balls $(B_j, g_j, p_j)$ which converge to a pointed geodesic ball $(B_{\infty}, g_{\infty}, p_{\infty})$ centered at a point $p_{\infty}$ of radius $\rho_\ast$ (that is, $B_{\infty} = B_{g_{\infty}}(p_{\infty}, \rho_\ast)$) in the intrinsic $C^{\infty}_{\mathrm{loc}}$ topology. This means, we can find a sequence of exhausting open sets $U_j$ in $B_{\infty}$, each containing $p_{\infty}$, and a sequence of diffeomorphisms $\phi_j : U_j \to \phi_j(U_j) \subset B_j \subset M_j$ such that $\phi_j(p_{\infty}) = p_j$ and the metrics $\tilde g_j := \phi_j^\ast g_j$ converge to $g_{\infty}$ in the smooth topology on every compact subset of $B_{\infty}$. Moreover, the proof of Theorem 4.1.2. in \cite{CaZ06} implies $U_j \subset U_{j+1}$ and for every $\rho \in (0, \rho_\ast)$, if $j$ is sufficiently large then $B_{g_j}(p_j, \rho) \subset \phi_j(U_j)$.  

\underline{Step 2:} Now that we have an intrinsic limit, it is straightforward to show that a subsequence of the immersions $\tilde F_j := F_j \circ \phi_j : U_j \to \mathbb{R}^N$ converges to a limit $F_{\infty} : B_{\infty} \to \mathbb{R}^N$, smoothly with respect to $g_{\infty}$ on compact subsets of $B_{\infty}$. Consider any compact subset $K \subset B_{\infty}$. If $j$ is sufficiently large, then $K \subset U_j$ and $\tilde F_j$ is defined on $K$. Moreover, there exists $\rho := \rho(K) \in (0, \rho_\ast)$ such that $\phi_j(K) \subset B_{g_j}(p_j, \rho)$ if $j$ is large enough. By diffeomorphism invariance, $|\nabla^k_{\tilde g_j} \tilde F_j|_{\tilde g_j} = |\nabla^k_{g_j} F_j|_{g_j}$. Recall from the discussion at the beginning of the appendix that $\nabla^k_{g_j} F_j = \nabla^{k-2}_{g_j} A_j$.  Therefore, given $K$ and $k \geq 2$, our uniform bounds for the second fundamental form and its covariant derivatives imply uniform bounds for $|\nabla^k_{\tilde g_j} \tilde F_j|_{\tilde g_j}$ on $K$ once $j$ is sufficiently large. On the other hand, for $k =1$, we have $|\nabla_{\tilde g_j} \tilde F_j |_{\tilde g_j} = n$ since $\tilde g_j = \phi^\ast g_j = \tilde F_j^\ast g_{\mathrm{flat}}$. Thus, since the metrics $\tilde g_j$ converge to $g_{\infty}$ on $K$, given a compact $K \subset B_{\infty}$ and an integer $k \geq 1$, we obtain uniform estimates for $|\nabla^k_{g_{\infty}} \tilde F_j |_{g_{\infty}}$ if $j$ is sufficiently large. By assumption $\tilde F_j(p_{\infty}) = F_j(p_j) = 0$ and so with the first derivative estimate, we conclude $|\tilde F_j| \leq C$ on $K$ as well for $j$ large enough. By the classical Arzela-Ascoli and diagonalization, we can find a subsequence of the $\tilde F_j$  which converge smoothly with respect to $g_{\infty}$ on every compact subset of $B_{\infty}$ to smooth limit $F_{\infty} : B_{\infty} \to \mathbb{R}^N$. Since $g_{\infty} = \lim_{j \to \infty} \tilde g_j = \lim_{j \to \infty} \tilde F_j^\ast g_{\mathrm{flat}} = F_{\infty}^\ast g_{\mathrm{flat}}$, the limiting $\mathbb{R}^N$-valued function is an immersion, completing the proof. \qed

\sc{Department of Mathematics, Columbia University, New York, NY 10027}

\end{document}